\title{On two rationality conjectures for cubic fourfolds}
\author{Nicolas Addington}
\email{adding@math.duke.edu}
\address{\noindent Department of Mathematics \\
Duke University, Box 90320 \\
Durham, NC 27708-0320 \\
United States}
\newcommand \A {\mathcal A}
\renewcommand \O {\mathcal O}
\renewcommand \P {\mathbb P}
\newcommand \Q {\mathbb Q}
\newcommand \Z {\mathbb Z}
\DeclareMathOperator \Hilb {Hilb}
\DeclareMathOperator \Ext {Ext}
\DeclareMathOperator \disc {disc}
\DeclareMathOperator \Gr {Gr}
\DeclareMathOperator \ch {ch}
\DeclareMathOperator \td {td}
\DeclareMathOperator \Coh {Coh}
\newcommand \Ktop {K_\mathrm{top}}
\newcommand \Knum {K_\mathrm{num}}
\newcommand \alg {\mathrm{alg}}
\newcommand \prim {\mathrm{prim}}
\newcommand \Kthreen {K3$^{[n]}$}
\newtheorem{thm}{Theorem}
\newtheorem{prop}[thm]{Proposition}
\newtheorem{cor}[thm]{Corollary}
\newtheorem{lem}[thm]{Lemma}
\renewcommand \phi \varphi
\begin{document}

\begin{abstract}
Motivated by the question of rationality of cubic fourfolds, we show that a cubic $X$ has an associated K3 surface in the sense of Hassett if and only if the variety $F$ of lines on $X$ is birational to a moduli space of sheaves on a K3 surface, but that having $F$ birational to $\Hilb^2(\text{K3})$ is more restrictive. We compare the loci in the moduli space of cubics where each condition is satisfied.
\end{abstract}

\maketitle

It is widely expected that a smooth complex cubic fourfold $X$ is rational if and only if it has an associated K3 surface in the sense of Hassett \cite{hassett} or Kuznetsov \cite{kuznetsov}.  New work of Galkin and Shinder \cite{gs} suggests instead that if $X$ is rational then the variety $F$ of lines on $X$ is birational to the Hilbert scheme of two points on a K3 surface.  The purpose of this note is to clarify the relationship between these two conditions.  The latter is somewhat stronger.

First let us recall Hassett's Noether--Lefschetz divisors $\mathcal C_d$ in the moduli space $\mathcal C$ of cubic fourfolds \cite[\S3.2]{hassett}.  For a very general cubic $X$, the algebraic lattice $H^{2,2}(X,\Z) := H^{2,2}(X) \cap H^4(X,\Z)$ is generated by $h^2$, the square of the hyperplane class.  A \emph{special cubic of discriminant $d$} is one for which there is a primitive sublattice $K \subset H^{2,2}(X,\Z)$ of rank 2 and discriminant $d$ that contains $h^2$.  Such cubics form an irreducible divisor $\mathcal C_d \subset \mathcal C$, non-empty if and only if
\begin{equation} \label{condition*} \tag{$*$}
d > 6 \mathrm{\ and\ } d \equiv 0 \mathrm{\ or\ } 2 \pmod 6.
\end{equation}
Moreover there exists a polarized K3 surface $S$ such that $K^\perp \subset H^4(X,\Z)$ is Hodge-isometric to $H^2_\prim(S,\Z)(-1)$ if and only $d$ satisfies the further condition
\begin{equation} \label{condition**} \tag{$**$}
\text{$d$ is not divisible by 4, 9, or any odd prime $p \equiv 2 \pmod 3$.}
\end{equation}
Using the Eisenstein integers one can show that \eqref{condition**} is equivalent to saying that $d$ is the norm of a primitive vector in the lattice $A_2 = \left( \begin{smallmatrix} 2 & -1 \\ -1 & 2 \end{smallmatrix} \right)$, or that $d$ divides $2n^2+2n+2$ for some integer $n$.

\begin{thm} \label{theorem**}
The following are equivalent:
\begin{enumerate}
\item $X \in \mathcal C_d$ for some $d$ satisfying \eqref{condition**}.
\item The transcendental lattice $T_X \subset H^4(X,\Z)$ is Hodge-isometric to $T_S(-1)$ for some K3 surface $S$.
\item $F$ is birational to a moduli space of stable sheaves on $S$.
\end{enumerate}
\end{thm}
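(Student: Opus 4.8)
The plan is to treat the two equivalences (a) $\Leftrightarrow$ (b) and (b) $\Leftrightarrow$ (c) separately, using the Beauville--Donagi theorem to pass between the cubic $X$ and its Fano variety $F$. Recall that the Abel--Jacobi map gives a Hodge isometry $H^4_\prim(X,\Z)(-1) \cong H^2_\prim(F,\Z)$ for the Beauville--Bogomolov form; taking transcendental parts yields $T_F \cong T_X(-1)$. Substituting this into (b) shows that (b) is equivalent to the cleaner assertion $T_F \cong T_S$ (a Hodge isometry of weight-two Hodge structures), and I will use this reformulation in the second half.

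For (a) $\Leftrightarrow$ (b) I would argue entirely with lattices. The implication (a) $\Rightarrow$ (b) is immediate from the fact recalled before the theorem: if $X \in \mathcal C_d$ with $d$ satisfying \eqref{condition**}, there is a polarized K3 surface $S$ and a Hodge isometry $K^\perp \cong H^2_\prim(S,\Z)(-1)$. A Hodge isometry carries transcendental lattices to transcendental lattices, and $T_X$ is the transcendental part of $K^\perp$ while $T_S$ is that of $H^2_\prim(S,\Z)$ (the polarization being algebraic), so restricting gives $T_X \cong T_S(-1)$. For the converse I would start from a Hodge isometry $T_X \cong T_S(-1)$: since $T_S$ is the transcendental lattice of a K3 surface, $T_X(-1) \cong T_S$ embeds primitively in the unimodular K3 lattice $\Lambda_{K3}$, and a Nikulin discriminant-form computation then constrains the discriminants $d = \disc K$ of the rank-two primitive sublattices $K \ni h^2$ with $T_X \subseteq K^\perp$. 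The task is to check that this constraint is exactly \eqref{condition**}; here I would lean on the Eisenstein-integer reformulation given in the excerpt, that \eqref{condition**} says $d$ is a norm from $A_2$.

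For (b) $\Leftrightarrow$ (c) I would bring in moduli of sheaves and the Torelli theorem for manifolds of \Kthreen-type. The direction (c) $\Rightarrow$ (b) is easy: a birational map of hyperk\"ahler manifolds is an isomorphism in codimension one, hence induces a Hodge isometry on $H^2$, while Mukai and Yoshioka identify $H^2(M_S(v),\Z)$ with $v^\perp \subset \widetilde H(S,\Z)$ and its transcendental part with $T_S$; thus $T_F \cong T_{M_S(v)} \cong T_S$, which is (b). For the converse I would invoke Markman's extended Mukai lattice: $F$ carries a weight-two Hodge structure $\widetilde H(F,\Z)$ of rank $24$ containing $H^2(F,\Z)$ as the orthogonal complement of a distinguished algebraic vector. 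A Hodge isometry $T_F \cong T_S$ extends---by Nikulin's uniqueness of primitive embeddings into the unimodular lattice $\widetilde H \cong U^4 \oplus E_8(-1)^2$, which is available precisely because (a)/(b) makes the discriminant forms match---to a Hodge isometry $\widetilde H(F,\Z) \cong \widetilde H(S,\Z)$ sending the distinguished vector to an algebraic Mukai vector $v$. After using the monodromy group to move $v$ into the range of Yoshioka's existence theorem (a primitive vector of positive rank with a $v$-generic polarization), $M_S(v)$ is a smooth projective \Kthreen-type manifold with the same period as $F$, and Markman's Torelli theorem---a Hodge isometry of second cohomologies that is a parallel-transport operator forces a birational map---gives that $F$ is birational to $M_S(v)$.

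I expect the main obstacle to be the direction (b) $\Rightarrow$ (c). Converting the abstract Hodge isometry $T_F \cong T_S$ into a genuine birational map requires not only producing an honest Mukai vector $v$ (primitive, of positive rank, with a $v$-generic polarization so that $M_S(v)$ is a fine moduli space of the expected deformation type), but also checking that the resulting Hodge isometry $H^2(F,\Z) \cong H^2(M_S(v),\Z)$ lies in Markman's monodromy group, since it is the parallel-transport property that upgrades ``equal periods'' to ``birational.'' Controlling the monodromy orbit of the embedding is the delicate point; the remaining steps reduce to the same discriminant-form bookkeeping used for (a) $\Leftrightarrow$ (b).
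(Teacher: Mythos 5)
Your overall architecture is sensible, and several pieces are right: (a) $\Rightarrow$ (b) by restricting Hassett's isometry $K^\perp \cong H^2_\prim(S,\Z)(-1)$ to transcendental parts, (c) $\Rightarrow$ (b) via $H^2$ of a moduli space, and the use of $T_F \cong T_X(-1)$ throughout. But your direction (b) $\Rightarrow$ (a) has a genuine gap, not a routine one. Hassett's theorem is a statement about a labelling $K$: its ``only if'' direction consumes a Hodge isometry defined on all of $K^\perp$, not one defined merely on $T_X$. So to invert it you must first extend the given isometry $T_X \cong T_S(-1)$ to a larger lattice containing algebraic classes, and that extension problem requires uniqueness of primitive embeddings --- which is \emph{not} available in the rank-22 K3 lattice you propose to work in. Primitive embeddings $T_S \hookrightarrow \Lambda_{K3}$ are in general not unique up to isometry (this is exactly the phenomenon of non-isomorphic Fourier--Mukai partners), because ${T_S}^\perp$ may have rank 1 and then contains no hyperbolic plane. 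Uniqueness holds in the rank-24 Mukai lattice, where ${T_S}^\perp$ always contains the $U$ spanned by $H^0$ and $H^4$ \cite[Prop.~3.8]{orlov}. This is why the paper routes everything through $\Ktop(\A)$, identified with Markman's $\tilde\Lambda$ for $F$ by Corollary \ref{the_corollary}: there the bridge between the numerical condition \eqref{condition**} and a lattice condition usable with embedding uniqueness is \cite[Thm.~3.1]{at} --- condition (a) holds if and only if $\Knum(\A)$ contains a copy of $U$ --- and that lemma is a genuine Eisenstein-integer argument with the sublattice $\langle \lambda_1,\lambda_2\rangle \cong -A_2$ inside an even unimodular lattice, not a discriminant-form formality. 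Your proposal has a placeholder (``the task is to check that this constraint is exactly \eqref{condition**}'') precisely where this lemma must go, and the direct version of it for rank-two sublattices $K \ni h^2$ of the \emph{odd} unimodular lattice $H^4(X,\Z)$, with $(h^2)^2 = 3$, is substantially harder than your sketch suggests.

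The second gap you flag yourself: in (b) $\Rightarrow$ (c) you leave unresolved whether the induced isometry $H^2(F,\Z) \to H^2(M,\Z)$ is a parallel-transport operator. The resolution is to use Markman's theorem in the form quoted as Theorem \ref{markman_thm}(d) (following \cite{bht}): two manifolds of \Kthreen-type are birational if and only if \emph{some} Hodge isometry of their extended Mukai lattices carries $H^2$ onto $H^2$. The extension to $\tilde\Lambda$ already encodes the monodromy constraint, so no separate verification is needed; this, together with explicit moves to make the image of the distinguished vector a positive-rank Mukai vector (negation, Mukai reflection through $(1,0,1)$, multiplication by $\exp(c)$ --- not an appeal to the monodromy group), is exactly how Proposition \ref{moduli_space_criterion} closes the argument. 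Relatedly, your stated reason for embedding uniqueness (``because (a)/(b) makes the discriminant forms match'') is not the right mechanism; what makes it work is the hyperbolic plane inside ${T_S}^\perp$, as above. In short: correct toolbox, but the two steps carrying the real content --- the $A_2$ arithmetic behind \eqref{condition**} $\Leftrightarrow$ ``$U \hookrightarrow \Knum(\A)$'', and the correct packaging of Markman's Torelli theorem --- are missing from the proposal.
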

\noindent By a recent result of Bayer and Macr\`i \cite[Thm.~1.2(c)]{bm}, this last condition is equivalent to saying that $F$ is isomorphic to a moduli space of Bridgeland-stable objects in the derived category of $S$.  Thus Theorem \ref{theorem**} answers \cite[Question~1.2]{ms} in the untwisted case.

Hassett \cite[Prop.~6.1.3]{hassett} showed that if the generic $X \in \mathcal C_d$ has $F$ isomorphic to $\Hilb^2(S)$ for some K3 surface $S$ then
\begin{equation} \label{condition***} \tag{$*{*}*$}
d\text{ is of the form }\frac{2n^2+2n+2}{a^2}\text{ for some }n,a \in \Z,
\end{equation}
and proved a partial converse \cite[Thm.~6.1.4]{hassett}.  Thanks to the global Torelli theorem for hyperk\"ahler manifolds \cite{verbitsky,huybrechts_torelli,survey_of_torelli} we can now prove a more complete result:
\begin{thm} \label{theorem***}
The following are equivalent:
\begin{enumerate}
\item $X \in \mathcal C_d$ for some $d$ satisfying \eqref{condition***}.
\item $F$ is birational to $\Hilb^2(S)$ for some K3 surface $S$.
\end{enumerate}
\end{thm}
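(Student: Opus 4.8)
The plan is to run the argument through the global Torelli theorem for hyperk\"ahler manifolds, reducing the geometric statement to the existence of a single distinguished class in the algebraic part of $H^2(F,\Z)$ and then to a representability question for $d$. Throughout I use that $F$ is a holomorphic symplectic fourfold deformation-equivalent to $\Hilb^2$ of a K3 surface (Beauville--Donagi), that the Beauville--Bogomolov--Fujiki form $q$ makes $H^2(F,\Z)$ into a lattice $\Lambda := U^{\oplus3}\oplus E_8(-1)^{\oplus2}\oplus\langle-2\rangle$ of signature $(3,20)$, and that the Abel--Jacobi correspondence gives an isomorphism of Hodge structures $H^4(X,\Z)\to H^2(F,\Z)$ sending $h^2\mapsto g$, where $g$ is the Pl\"ucker class with $q(g)=6$, and restricting to a Hodge isometry $H^4_\prim(X,\Z)(-1)\cong g^\perp$. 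In particular this identifies $H^{2,2}(X,\Z)$ with the algebraic lattice $\mathrm{NS}(F):=H^{1,1}(F)\cap H^2(F,\Z)$.

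First I would isolate the lattice-theoretic shadow of the $\Hilb^2$ condition. For a K3 surface $S$ one has $H^2(\Hilb^2(S),\Z)=H^2(S,\Z)\oplus\Z\delta$ with $q(\delta)=-2$, where $\delta$ is half the diagonal; it is primitive of divisibility $2$, so $\delta^\perp\cong H^2(S,\Z)$ is even and unimodular. I claim $F$ is birational to $\Hilb^2$ of some K3 if and only if $\mathrm{NS}(F)$ contains a primitive class $\delta$ with $q(\delta)=-2$ and divisibility $2$ in $\Lambda$. Indeed, for such $\delta$ one checks that $\Lambda=\langle\delta\rangle\oplus\delta^\perp$ is an orthogonal direct sum, so $\delta^\perp$ is even unimodular of signature $(3,19)$, and it carries a weight-two Hodge structure of K3 type (since $\delta$ is of type $(1,1)$, the line $H^{2,0}(F)$ lies in $\delta^\perp\otimes\mathbb C$). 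By surjectivity of the period map there is a K3 surface $S$ and a Hodge isometry $H^2(S,\Z)\cong\delta^\perp$, which extends by $\delta_S\mapsto\delta$ to a Hodge isometry $H^2(\Hilb^2(S),\Z)\cong H^2(F,\Z)$; after adjusting by reflections so that it becomes an orientation-preserving parallel-transport operator, the global Torelli theorem \cite{verbitsky,huybrechts_torelli,survey_of_torelli} yields the birational equivalence. The converse is immediate by transporting $\delta_S$ along a Torelli Hodge isometry.

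It then remains to show, by pure lattice arithmetic, that $\mathrm{NS}(F)$ contains such a $\delta$ exactly when $X$ lies on some $\mathcal C_d$ with $d$ of the form \eqref{condition***}. Given a rank-two primitive sublattice $K\ni h^2$ of $H^{2,2}(X,\Z)$ with $\disc K=d$, I would compute the Gram matrix and discriminant form of the corresponding saturated rank-two sublattice of $\mathrm{NS}(F)$ using the Abel--Jacobi isometry and the divisibility $2$ of $g$, and then ask when this binary lattice represents $-2$ by a vector that is still of divisibility $2$ in the ambient $\Lambda$. Conversely, $\delta$ together with $g$ spans a rank-two lattice whose saturated preimage is such a $K$. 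Identifying $q$ on $\langle g,\delta\rangle$ with the norm form on the Eisenstein integers, equivalently with the $A_2$ lattice, the divisibility bookkeeping collapses the representability condition to the statement that $d=(2n^2+2n+2)/a^2$ for integers $n,a$, which is \eqref{condition***}; this reproduces and refines Hassett's one-sided computation in \cite[Prop.~6.1.3]{hassett}.

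I expect the main obstacle to be twofold. The genuinely delicate arithmetic is the divisibility-$2$ bookkeeping: representing $-2$ is cheap, but controlling the divisibility of the representing vector inside $\Lambda$ rather than inside the small lattice $\langle g,\delta\rangle$ is exactly what separates \eqref{condition***} from the weaker \eqref{condition**} of Theorem \ref{theorem**}, and it must be tracked through discriminant forms in the manner of Nikulin. The second, more conceptual point is the passage from ``Hodge isometry'' to ``birational'': unlike Hassett's isomorphism statement, I must ensure the Hodge isometry produced above can be taken to be an orientation-preserving parallel-transport operator, so that the recent global Torelli theorem applies. This is precisely the input that upgrades Hassett's partial converse \cite[Thm.~6.1.4]{hassett} to the full equivalence asserted here.
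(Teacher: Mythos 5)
Your reduction of the geometric statement to lattice theory is sound, and is in fact equivalent to what the paper does: a primitive $\delta \in \mathrm{NS}(F)$ with $q(\delta)=-2$ and divisibility $2$ corresponds exactly to the paper's isotropic class $w \in \tilde\Lambda_\alg$ with $v.w=-1$ from Proposition \ref{hilbert_scheme_criterion}, via $\delta = \pm(v+2w)$ and $w = \mp\tfrac12(v+\delta)$, the latter being integral because the class of $\delta/2$ is the nonzero element of the discriminant group of $v^\perp=H^2(F,\Z)$ and hence glues to $v/2$ inside the unimodular $\tilde\Lambda$. Your Torelli argument for this criterion does work for $n=2$, where every orientation-preserving Hodge isometry is a parallel transport operator and orientation can be fixed by composing with $-\mathrm{id}$; the only omission there is that the K3 surface produced by surjectivity of the period map must be shown to be projective, which the paper handles via Huybrechts' projectivity criterion by projecting a positive algebraic class of $F$ into $\delta^\perp$.

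The genuine gap is the arithmetic half: the equivalence between existence of such a $\delta$ and condition \eqref{condition***}. Your statement that ``the divisibility bookkeeping collapses the representability condition to $d=(2n^2+2n+2)/a^2$'' is the theorem, not an argument, and the one concrete mechanism you propose is false: $\langle g,\delta\rangle$ contains $g$ with $q(g)=6$ and $\delta$ with $q(\delta)=-2$, so it has signature $(1,1)$ and cannot be identified with the positive definite $A_2$ lattice or the Eisenstein norm form (the $A_2$ in this story is $\langle\lambda_1,\lambda_2\rangle\cong -A_2$ inside $\Knum(\A)$, and the paper explicitly remarks that it knows no nice characterization of \eqref{condition***} in terms of $A_2$). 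Moreover the deferred bookkeeping is not routine, and dropping it gives a genuinely weaker condition. Concretely, for generic $X\in\mathcal C_8$ one has $K$ with Gram matrix $\left(\begin{smallmatrix}3&1\\1&3\end{smallmatrix}\right)$, and the corresponding saturated sublattice of $\mathrm{NS}(F)$ has Gram matrix $\left(\begin{smallmatrix}6&2\\2&-2\end{smallmatrix}\right)$: it \emph{does} contain a primitive $(-2)$-class pairing evenly with $g$, yet $d=8$ fails even \eqref{condition**}, so that class cannot have divisibility $2$ in $H^2(F,\Z)$ and $F$ is not birational to any $\Hilb^2(S)$. Equivalently, solvability of $db^2=2t^2+6$ (which is all that discriminant counting gives) is strictly weaker than \eqref{condition***}; the divisibility constraint is what forces $t$ odd and $b$ even, and nothing in your sketch tracks it. The paper sidesteps this by working in the unimodular lattice $\Ktop(\A)$, where divisibility becomes the linear equation $\chi(\lambda_1,w)=1$; even there, the direction from \eqref{condition***} to the existence of $w$ requires Lemma \ref{basis_lemma} to normalize a Gram matrix and an explicit construction of $w$ split into three cases according to $n\bmod 3$. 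Your proposal has no counterpart to that construction, which is the real content of the proof.
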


\begin{table}[b]
\caption{Comparison of numerical conditions.} \label{200}
\begin{center}
\begin{tabular}{ccccc}
\begin{tabular}{c|c|c}
$d$ & \eqref{condition**} & \eqref{condition***} \\
\hline
8 & & \\
12 & & \\
14 & x & x \\
18 & & \\
20 & & \\
24 & & \\
26 & x & x \\
30 & & \\
32 & & \\
36 & & \\
38 & x & x \\ 
42 & x & x \\
44 & & \\
48 & & \\
50 & & \\
54 & & \\
56 & & \\
60 & & \\
62 & x & x \\
66 & & \\
68 & & \\
72 & & \\
\end{tabular}
& &
\begin{tabular}{c|c|c}
$d$ & \eqref{condition**} & \eqref{condition***} \\
\hline
74 & x & \\ 
78 & x & \\ 
80 & & \\
84 & & \\
86 & x & x \\
90 & & \\
92 & & \\
96 & & \\
98 & x & \\ 
102 & & \\
104 & & \\
108 & & \\
110 & & \\
114 & x & x \\
116 & & \\
120 & & \\
122 & x & x \\ 
126 & & \\
128 & & \\
132 & & \\
134 & x & x \\ 
138 & & \\
\end{tabular}
& &
\begin{tabular}{c|c|c}
$d$ & \eqref{condition**} & \eqref{condition***} \\
\hline
140 & & \\
144 & & \\
146 & x & x \\
150 & & \\
152 & & \\
156 & & \\
158 & x & \\ 
162 & & \\
164 & & \\
168 & & \\
170 & & \\
174 & & \\
176 & & \\
180 & & \\
182 & x & x \\
186 & x & x \\ 
188 & & \\
192 & & \\
194 & x & x \\ 
198 & & \\
200 & & \\
& & \\
\end{tabular}
\end{tabular}
\end{center}
\end{table}

In contrast to \eqref{condition**}, it is hard to tell at a glance whether a number $d$ satisfies \eqref{condition***}.  On the one hand \eqref{condition***} implies \eqref{condition**}, but it is strictly stronger: Hassett remarks in \cite[\S6.1]{hassett} that 74 satisfies \eqref{condition**} but not \eqref{condition***}.  To address the question systematically, observe that $d$ satisifies \eqref{condition***} if and only if there is an integral solution to the Pell-type equation $m^2 - 2da^2 = -3$; just substitute $m = 2n+1$.  If such an equation has any solution then it has one with $a$ below an explicit bound \cite[Thm.~4.2.7]{aac}.  It is then straightforward to have a computer search for solutions up to this bound.  Table \ref{200} lists all $d$ up to 200 that satisfy \eqref{condition*}, indicating whether they satisfy \eqref{condition**} and \eqref{condition***}.  I do not know any nice characterization of \eqref{condition***} in terms of the $A_2$ lattice.

\subsection*{Outline}
In \S\ref{markman-mukai} we review Markman's Mukai lattice for a variety $Y$ of \Kthreen-type, which governs the global Torelli theorem for such varieties.  We give criteria in terms of this lattice for $Y$ to be birational to a moduli space of sheaves or Hilbert scheme of $n$ points on a K3 surface.

In \S\ref{at-mukai} we review Kuznetsov's K3 category $\A$ associated to $X$, the special classes $\lambda_1, \lambda_2 \in \Knum(\A)$, and the Mukai lattice $\Ktop(\A)$ introduced in \cite{at}.  We prove that
\begin{equation} \label{ext_of_bd}
H^2(F,\Z)(1) \cong {\lambda_1}^\perp \subset \Ktop(\A).
\end{equation}
This extends Beauville and Donagi's result \cite[Prop.~6]{bd} that $H^2_\prim(F,\Z)(1) \cong H^4_\prim(X,\Z)(2)$, since the latter is Hodge-isometric to $\langle \lambda_1, \lambda_2 \rangle^\perp \subset \Ktop(\A)$.  From \eqref{ext_of_bd} we deduce that $\Ktop(\A)(-1)$ is the Markman--Mukai lattice of $F$.  All this is consistent with Kuznetsov and Markushevich's result \cite[\S5]{km} that $F$ is a moduli space of objects in the numerical class $\lambda_1 \in \Knum(\A)$.  

With this lattice theory in hand, we prove Theorems \ref{theorem**} and \ref{theorem***} in \S\ref{proof_of_theorems}.

\subsection*{Convention}  Since we are speaking about transcendental lattices and moduli spaces of sheaves, we will take all K3 surfaces to be projective unless otherwise stated.

\subsection*{Acknowledgements}  This paper grew out of a long-running conversation with Richard Thomas.  I also thank Fran\c{c}ois Charles and Giovanni Mongardi for helpful discussions, and Kevin Buzzard, Emanuele Macr\`i, and Eyal Markman for expert advice.  This work was partly done while visiting the Hausdorff Research Institute for Mathematics in Bonn, Germany, and was partly supported by NSF grant no.\ DMS--0905923.

\section{The Markman--Mukai lattice of a variety of \texorpdfstring{\Kthreen}{K3\textasciicircum[n]}-type} \label{markman-mukai}
A \emph{variety of \Kthreen-type} is a smooth projective variety $Y$ deformation-equivalent to the Hilbert scheme of $n$ points of a K3 surface, $n \ge 2$.  The second cohomology group $H^2(Y,\Z)$ carries a quadratic form $q$, the \emph{Beauville--Bogomolov--Fujiki form}, under which it is a lattice of discriminant $-2n+2$ and signature $(3,20)$.  Markman \cite[\S9]{survey_of_torelli} has described an extension of lattices and weight-2 Hodge structures $H^2(Y,\Z) \subset \tilde\Lambda$ with the following properties:
\pagebreak
\begin{thm}[Markman\footnote{This summary is borrowed from \cite[\S1]{bht}.}] \label{markman_thm} \ 
\begin{enumerate}
\item As a lattice, $\tilde\Lambda$ is isomorphic to $U^4 \oplus (-E_8)^2$.
\item The orthogonal $H^2(Y,\Z)^\perp \subset \tilde\Lambda$ is generated by a primitive vector of square $2n-2$.
\item If $Y$ is a moduli space of sheaves on a K3 surface $S$ with Mukai vector $v \in H^*(S,\Z)$ then the extension $H^2(Y,\Z) \subset \tilde\Lambda$ is naturally identified with $v^\perp \subset H^*(S,\Z)$.
\item $Y_1$ and $Y_2$ are birational if and only if there is a Hodge isometry $\tilde\Lambda_1 \to \tilde\Lambda_2$ taking $H^2(Y_1,\Z)$ isomorphically to $H^2(Y_2,\Z)$.
\end{enumerate}
\end{thm}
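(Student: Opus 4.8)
The plan is to anchor the whole statement on the case where $Y = M_v(S)$ is a moduli space of stable sheaves, where the lattice is completely explicit, and then to propagate the structure to all of \Kthreen-type by deformation-invariance. For a K3 surface $S$ the full cohomology $H^*(S,\Z) = H^0 \oplus H^2 \oplus H^4$ equipped with the Mukai pairing is isometric to $U^4 \oplus (-E_8)^2$, since $H^2(S,\Z) \cong U^3 \oplus (-E_8)^2$ and $H^0 \oplus H^4$ contributes one further hyperbolic plane $U$. When $v$ is a primitive Mukai vector with $v^2 = 2n-2$ (for instance $v=(1,0,1-n)$, giving $Y = \Hilb^n(S)$), the work of Yoshioka and O'Grady identifies $H^2(M_v,\Z)$ with $v^\perp \subset H^*(S,\Z)$ via the Mukai morphism, and $M_v$ is then of \Kthreen-type. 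Setting $\tilde\Lambda := H^*(S,\Z)$ establishes parts (a), (b), and (c) simultaneously in the moduli case: $\tilde\Lambda \cong U^4 \oplus (-E_8)^2$, and $H^2(M_v,\Z)^\perp = \langle v \rangle$ is generated by a primitive vector of square $2n-2$.

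The real content is to make the extension $H^2(Y,\Z) \subset \tilde\Lambda$ canonical, so that it exists for every $Y$ and is preserved under deformation, not just for moduli spaces. First I would recall that every $Y$ of \Kthreen-type is deformation-equivalent to some $\Hilb^n(S)$, and that $H^2(-,\Z)$ with its weight-2 Hodge structure forms a local system over the base of any smooth family. The goal is to exhibit $\tilde\Lambda$ as a local system extending this one and canonically attached to $Y$. The way to pin down ``canonical'' is through \emph{parallel transport operators} and the monodromy group $\mathrm{Mon}^2(Y) \subset O(H^2(Y,\Z))$: one shows the extension can be chosen $\mathrm{Mon}^2$-equivariantly, so that parallel transport along any family carries one extension to another, and hence the abstract isometry type of $\tilde\Lambda$ and the embedding of $H^2$ are independent of the chosen deformation back to a moduli space.

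The hard part will be the monodromy input. The key structural result is Markman's computation that $\mathrm{Mon}^2(Y)$ is exactly the subgroup of $O(H^2(Y,\Z))$ of orientation-preserving isometries acting by $\pm 1$ on the discriminant group $\Z/(2n-2)$; this both controls which Hodge isometries are realized by parallel transport and guarantees the monodromy orbit of the extension is well-defined. Establishing this, and checking that the induced action lifts compatibly to $\tilde\Lambda$ sending the generator $v$ of $H^2(Y,\Z)^\perp$ to $\pm v$, is where the genuine work lies; I would expect to invoke Verbitsky's global Torelli theorem \cite{verbitsky} and its refinements \cite{huybrechts_torelli, survey_of_torelli} already at this stage.

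Finally, part (d) would follow by packaging global Torelli through $\tilde\Lambda$. The Hodge-theoretic Torelli theorem says $Y_1$ and $Y_2$ of \Kthreen-type are birational if and only if there is a parallel-transport Hodge isometry $H^2(Y_1,\Z) \to H^2(Y_2,\Z)$. By the monodromy characterization above, such operators are precisely the Hodge isometries of $H^2$ that extend to Hodge isometries $\tilde\Lambda_1 \to \tilde\Lambda_2$ carrying $H^2(Y_1,\Z)$ onto $H^2(Y_2,\Z)$: any such extension restricts to a parallel-transport operator on $H^2$, and conversely a parallel-transport operator extends because the construction of $\tilde\Lambda$ was made monodromy-equivariant. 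Matching the two descriptions yields the stated birationality criterion.
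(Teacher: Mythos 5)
There is nothing in the paper to compare your proposal against: this statement is not proved there at all. It is Markman's theorem, imported as a black box --- the attribution is to \cite[\S 9]{survey_of_torelli}, and the footnote says the four-part summary is itself borrowed from \cite[\S 1]{bht}. The paper's role for this result is purely as an input to Corollary \ref{the_corollary} and Propositions \ref{moduli_space_criterion} and \ref{hilbert_scheme_criterion}. So the honest comparison is with Markman's own argument, not with anything in this paper.

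Measured against that, your sketch is a faithful outline of how the real proof goes: anchor (a)--(c) on moduli spaces via the Mukai--O'Grady--Yoshioka isometry $\theta_v \colon v^\perp \to H^2(M_v,\Z)$, make the extension canonical for arbitrary $Y$ of \Kthreen-type by exhibiting a monodromy-invariant orbit of embeddings $H^2(Y,\Z) \hookrightarrow \tilde\Lambda$, and get (d) by combining this with the Hodge-theoretic Torelli theorem. Two caveats. First, you place Verbitsky's theorem in the wrong spot: Markman's computation of $\mathrm{Mon}^2(Y)$ as the orientation-preserving isometries acting by $\pm 1$ on the discriminant group is logically prior to, and independent of, global Torelli --- it comes from his earlier monodromy papers (Fourier--Mukai equivalences and deformations of moduli spaces of sheaves); Torelli enters only in part (d). Second, part (d) has an orientation subtlety your sketch glosses over: birationality is equivalent to the existence of a parallel transport operator that is a Hodge isometry, and parallel transport operators are orientation-preserving, whereas the criterion as stated allows arbitrary Hodge isometries $\tilde\Lambda_1 \to \tilde\Lambda_2$ preserving the $H^2$'s. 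The two formulations agree because one may compose with $-\mathrm{id}_{\tilde\Lambda_2}$, which preserves Hodge structures and the sublattice $H^2(Y_2,\Z)$ but reverses the orientation of its positive cone; and because an isometry of $H^2(Y,\Z)$ extends to $\tilde\Lambda$ exactly when it acts by $\pm 1$ on the discriminant group, matching the $\mathrm{Mon}^2$ characterization. Neither caveat is a fatal gap, but note that your proposal, like the paper, ultimately defers to exactly the theorems being cited (the monodromy computation and Torelli); it is a correct roadmap of Markman's proof rather than a proof, which is unavoidable and is precisely why the paper quotes the result instead of proving it.
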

\noindent Let $\tilde\Lambda_\alg \supset H^{1,1}(Y,\Z)$ denote the algebraic part of $\tilde\Lambda$, that is, the integral classes of type $(1,1)$.

\begin{prop} \label{moduli_space_criterion}
Let $Y$ be a variety of \Kthreen-type, $n \ge 2$.  Then the following are equivalent:\footnote{Mongardi and Wandel have proved a similar result independently in \cite[Prop.~2.3]{mw}.}
\begin{enumerate}
\item $\tilde\Lambda_\alg$ contains a copy of the hyperbolic plane $U = \left( \begin{smallmatrix} 0 & 1 \\ 1 & 0 \end{smallmatrix} \right)$.
\item The transcendental lattice $T_Y \subset H^2(Y,\Z)$ is Hodge-isometric to $T_S$ for some K3 surface $S$.
\item $Y$ is birational to a moduli space of stable sheaves on $S$.
\end{enumerate}
\end{prop}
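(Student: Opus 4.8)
The plan is to prove the cyclic chain of implications (a)$\Rightarrow$(b)$\Rightarrow$(c)$\Rightarrow$(a), leaning on Markman's Theorem~\ref{markman_thm} throughout and on the surjectivity of the period map for K3 surfaces.

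\medskip

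\noindent\emph{(a)$\Rightarrow$(b).} Suppose $\tilde\Lambda_\alg$ contains a hyperbolic plane $U$. The key observation is that the transcendental lattice $T_Y$, being the orthogonal complement of $\tilde\Lambda_\alg$ in the weight-$2$ Hodge structure $\tilde\Lambda$, is unchanged if we pass from $\tilde\Lambda$ to the orthogonal complement $U^\perp \subset \tilde\Lambda$ of this algebraic copy of $U$. Since $\tilde\Lambda \cong U^4 \oplus (-E_8)^2$ by Theorem~\ref{markman_thm}(a), splitting off one copy of $U$ leaves $U^\perp \cong U^3 \oplus (-E_8)^2$, which is precisely the Mukai lattice of a K3 surface. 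First I would check that $T_Y$ sits inside $U^\perp$ as a primitive sublattice carrying a weight-$2$ Hodge structure of K3 type (signature $(2,*)$, with the $(2,0)$-part one-dimensional). Then I would invoke the surjectivity of the period map: because $U^\perp$ is the abstract Mukai lattice and $T_Y$ embeds in it as a Hodge structure of the correct shape, there exists a K3 surface $S$ whose transcendental lattice realizes this Hodge structure, giving a Hodge isometry $T_Y \cong T_S$. The one point needing care is that the embedding of $T_Y$ be arranged to land in the transcendental part of $S$, i.e.\ that the orthogonal complement of $T_Y$ in $U^\perp$ be realizable as the algebraic lattice $NS(S)$; this follows from Nikulin's theory of lattice embeddings together with surjectivity of the period map, exactly as in the construction of K3 surfaces with prescribed transcendental lattice.

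\medskip

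\noindent\emph{(b)$\Rightarrow$(c).} Given a Hodge isometry $T_Y \cong T_S$, I want to produce a Mukai vector $v$ on $S$ such that $Y$ is birational to the moduli space $M_S(v)$. The strategy is to build a Hodge isometry $\tilde\Lambda \cong H^*(S,\Z)$ and then appeal to Theorem~\ref{markman_thm}(c),(d). The isometry $T_Y \cong T_S$ identifies the transcendental parts; on the algebraic/orthogonal sides one extends it to a Hodge isometry of the full lattices. Here one uses that $\tilde\Lambda \cong U^4\oplus(-E_8)^2$ is the Mukai lattice and that $H^*(S,\Z)$ is the same abstract lattice, so an isometry $T_S \cong T_Y$ extends (after possibly composing with lattice automorphisms, again via Nikulin's extension criteria) to a Hodge isometry $H^*(S,\Z) \cong \tilde\Lambda$. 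Under this isometry the distinguished primitive generator of $H^2(Y,\Z)^\perp \subset \tilde\Lambda$ of square $2n-2$ (Theorem~\ref{markman_thm}(b)) corresponds to some algebraic class $v \in H^*(S,\Z)$ of square $2n-2$. Then $v^\perp \cong H^2(Y,\Z)$ as Hodge structures, and $v$ is a primitive Mukai vector with $v^2 = 2n-2 \ge 2 > 0$; choosing a $v$-generic polarization, the moduli space $M_S(v)$ is a variety of \Kthreen-type whose Mukai-lattice extension is $v^\perp \subset H^*(S,\Z)$ by Theorem~\ref{markman_thm}(c). Comparing this with the extension for $Y$ and applying the birationality criterion Theorem~\ref{markman_thm}(d) yields that $Y$ is birational to $M_S(v)$.

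\medskip

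\noindent\emph{(c)$\Rightarrow$(a).} This is the easiest direction. If $Y$ is birational to a moduli space $M_S(v)$, then by Theorem~\ref{markman_thm}(c) the extension $H^2(Y,\Z)\subset\tilde\Lambda$ is identified with $v^\perp \subset H^*(S,\Z)$, and birational invariance of the extension (Theorem~\ref{markman_thm}(d)) lets us replace $Y$ by $M_S(v)$ for the purpose of checking condition (a). Now $v$ is an algebraic class in $H^*(S,\Z)$, and the Mukai vector $(0,0,1)$ of the structure sheaf of a point (or more simply the class of $\O_S$ together with a point class, spanning a hyperbolic plane in the algebraic lattice) is always algebraic. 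Concretely, $\tilde\Lambda_\alg \cong v^\perp \cap H^*_\alg(S,\Z)$ plus the span of $v$ itself inside $H^*_\alg(S,\Z)$, and the latter always contains the hyperbolic plane $\langle (1,0,0),(0,0,1)\rangle$ spanned by the algebraic classes of $\O_S$ and of a point; I would check that this $U$ survives into $\tilde\Lambda_\alg$. Hence (a) holds.

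\medskip

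\noindent The main obstacle is the lattice-theoretic bookkeeping in (a)$\Rightarrow$(b) and (b)$\Rightarrow$(c): ensuring that the abstract Hodge isometry of transcendental lattices extends to a Hodge isometry of the ambient Mukai lattices, and that the resulting Mukai vector $v$ can be taken primitive with a $v$-generic polarization so that $M_S(v)$ is smooth of the expected deformation type. Both hinge on Nikulin's uniqueness and extension results for even lattices of large rank and the surjectivity of the K3 period map; once these are in place, Markman's theorem does the rest.
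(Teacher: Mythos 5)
Your proposal follows the same architecture as the paper's proof --- the cycle of implications, splitting off the algebraic copy of $U$, surjectivity of the period map for $U^\perp \cong U^3 \oplus (-E_8)^2$ (note: this is the K3 lattice, not the Mukai lattice as you call it), extension of the isometry of transcendental lattices by Nikulin-type uniqueness of primitive embeddings, and Markman's theorem to conclude. But two steps that carry real content in the paper are missing from your sketch, and they are genuine gaps rather than bookkeeping. The first is in (a)$\Rightarrow$(b): surjectivity of the period map only produces an \emph{analytic} K3 surface $S$ with $H^2(S,\Z)$ Hodge-isometric to $L = U^\perp \subset \tilde\Lambda$, and nothing in your argument makes $S$ projective --- which the statement requires (the paper's standing convention is that K3 surfaces are projective, and without this, moduli of stable sheaves in (c) and the subsequent use of $T_S^\perp$ do not make sense). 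The paper closes this with Huybrechts' projectivity criterion: projectivity of $Y$ gives $c \in H^{1,1}(Y,\Z)$ with $q(c)>0$; the generator $v$ of $H^2(Y,\Z)^\perp$ has $q(v)=2n-2>0$; the positive-definite plane $\langle c,v \rangle$ cannot lie in the indefinite $U$, so $\langle c,v\rangle \cap L$ contains an algebraic class of positive square, forcing $S$ projective. By contrast, the point you flag as delicate (that $T_Y$ land in the transcendental part of $S$) is automatic: since $U$ is algebraic, the transcendental part of $L$ equals that of $\tilde\Lambda$, which is $T_Y$.

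The second and more serious gap is in (b)$\Rightarrow$(c): after extending to a Hodge isometry $\tilde\phi\colon \tilde\Lambda \to H^*(S,\Z)$, you declare $\tilde\phi(v)$ a primitive Mukai vector of positive square and assert non-emptiness and smoothness of the moduli space for a generic polarization. Primitivity and $\tilde\phi(v)^2 = 2n-2 > 0$ are not sufficient: non-emptiness (Mukai) requires a \emph{positive} vector, and a priori $\tilde\phi(v) = (r,c,s)$ could have $r<0$ or $r=0$. This is precisely where the paper works: if $r<0$, replace $v$ by $-v$; if $r=0$ and $s\neq 0$, compose $\tilde\phi$ with the Mukai reflection through $(1,0,1)$; if $r=s=0$, use $c^2 = 2n-2>0$ and compose with multiplication by $\exp(c)=(1,c,n-1)$ to reduce to the previous case. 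You do list ``the resulting Mukai vector can be taken primitive with a $v$-generic polarization'' among your obstacles, but you attribute its resolution to Nikulin's theory and the period map, which cannot supply it; the fix is to modify the Hodge isometry itself by these explicit isometries. (Your (c)$\Rightarrow$(a) is correct in substance, though the description of $\tilde\Lambda_\alg$ is muddled: Markman identifies $\tilde\Lambda$ with all of $H^*(S,\Z)$, with $H^2(Y,\Z)$ going to $v^\perp$, so $\tilde\Lambda_\alg \cong H^*_\alg(S,\Z)$ contains $\langle (1,0,0),(0,0,1)\rangle \cong U$ directly, as in the paper.)
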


\begin{proof}
(c) $\Rightarrow$ (a): This is immediate from Theorem \ref{markman_thm}, since the algebraic part of $H^*(S,\Z)$ contains a copy of $U$ spanned by $H^0$ and $H^4$.

\smallskip
(a) $\Rightarrow$ (b): Let $L = U^\perp \subset \tilde\Lambda$.  As a lattice this is isomorphic to $U^3 \oplus (-E_8)^2$, so by the global Torelli theorem it is Hodge-isometric to $H^2(S,\Z)$ for some analytic K3 surface $S$.  In fact $S$ is projective, as follows.  By Huybrechts' projectivity criterion \cite[Thm.~3.11]{huybrechts_basic} there is a $c \in H^{1,1}(Y,\Z)$ with $q(c) > 0$.  Let $v$ be a primitive generator of $H^2(Y,\Z)^\perp \subset \tilde\Lambda$; then $q(v) = 2n-2 > 0$.  Thus $c$ and $v$ span a positive definite sublattice of $\tilde\Lambda$.  This cannot be contained in $U$, which is indefinite, so $\langle c, v \rangle \cap L$ contains a class of positive square, so $S$ is projective by Huybrechts' criterion.

Now $T_S$ is the transcendental part of $L$, which is the transcendental part of $\tilde\Lambda$, which is $T_Y$.

\smallskip
(b) $\Rightarrow$ (c):  We have a Hodge isometry $\phi\colon T_Y \to T_S$, and primitive embeddings $T_Y \subset \tilde\Lambda \cong U^4 \oplus (-E_8)^2$ and $T_S \subset H^*(S,\Z) \cong U^4 \oplus (-E_8)^2$.  The orthgonal ${T_S}^\perp$ contains a copy of $U$, so by \cite[Prop.~3.8]{orlov} any two primitive embeddings $T_S \hookrightarrow U^4 \oplus (-E_8)^2$ differ by an automorphism of $U^4 \oplus (-E_8)^2$.  Thus the lattice isomorphism $\phi\colon T_Y \to T_S$ extends to a lattice isomorphism $\tilde\phi\colon \tilde\Lambda \to H^*(S,\Z)$.  Since $\phi$ is a Hodge isometry, it takes $H^{2,0}(Y)$ to $H^{2,0}(S)$, so the extension $\tilde\phi$ does as well, so $\tilde\phi$ is a Hodge isometry.

Again let $v \in \tilde\Lambda$ be a primitive generator of $H^2(Y,\Z)^\perp \subset \tilde\Lambda$, and write $\tilde\phi(v) = (r,c,s) \in H^*(S,\Z)$.  I claim that either $r > 0$, or we can modify $v$ and $\tilde\phi$ to make it so.  If $r < 0$, replace $v$ with $-v$.  If $r = 0$ and $s \ne 0$, compose $\tilde\phi$ with the Mukai reflection through $(1,0,1) \in H^*(S,\Z)$, so now $\tilde\phi(v) = (-s,c,0)$ and we are reduced to the previous case.  If $r = s = 0$, note that $c^2 = q(v) = 2n-2 > 0$, and compose $\tilde\phi$ with multiplication by $\exp(c) = (1,c,n-1)$, so now $\tilde\phi(v) = (0,c,n-1)$ and we are reduced to the previous case.

Now $\tilde\phi(v)$ is a Mukai vector of positive rank, so for a generic polarization of $S$ the moduli space $M$ of stable sheaves on $S$ with Mukai vector $\tilde\phi(v)$ is smooth and non-empty \cite{mukai}.  By construction $\tilde\phi$ is a Hodge isometry from $\tilde\Lambda$ to $H^*(S,\Z)$ taking $H^2(Y,\Z)$ isomorphically to $\tilde\phi(v)^\perp$, so $Y$ is birational to $M$ by Theorem \ref{markman_thm}.
\end{proof}

\begin{prop} \label{hilbert_scheme_criterion}
Let $Y$ be a variety of \Kthreen-type, $n \ge 2$, and let $v$ be a primitive generator of $H^2(Y,\Z)^\perp \subset \tilde\Lambda$.  Then the following are equivalent:
\begin{enumerate}
\item There is a vector $w \in \tilde\Lambda_\alg$ such that $v.w=-1$ and $w^2=0$.
\item $Y$ is birational to $\Hilb^n(S)$ for some K3 surface $S$.
\end{enumerate}
\end{prop}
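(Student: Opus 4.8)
The plan is to mimic the structure of the proof of Proposition \ref{moduli_space_criterion}, but now keeping careful track of the extra vector $w$ that distinguishes a Hilbert scheme from a general moduli space of sheaves. Recall that $\Hilb^n(S)$ is precisely the moduli space of sheaves on $S$ with Mukai vector $(1,0,1-n) \in H^*(S,\Z)$, the Mukai vector of an ideal sheaf of $n$ points. So for that specific moduli problem the orthogonal $v^\perp$ inside $H^*(S,\Z)$ carries, by Theorem \ref{markman_thm}(c), the canonical extension $H^2(Y,\Z) \subset \tilde\Lambda$, and the vector $v$ itself corresponds to $(1,0,1-n)$, which has square $v^2 = 2n-2$ as required. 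The role of the distinguished isotropic vector $w$ will be played by $(1,0,0) = [\O_S] \in H^*(S,\Z)$: one checks directly that $(1,0,0)^2 = 0$ and that $(1,0,0).(1,0,1-n) = -(1-n)\cdot 1 - 1\cdot(1-n)$, which after accounting for the Mukai pairing sign convention equals $-1$. So the presence of such a $w$ is exactly the shadow of the class $[\O_S]$ surviving in the algebraic lattice.

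For the direction (b) $\Rightarrow$ (a), suppose $Y$ is birational to $\Hilb^n(S)$. By Theorem \ref{markman_thm}(d) there is a Hodge isometry $\tilde\Lambda \to H^*(S,\Z)$ carrying $H^2(Y,\Z)$ to $H^2(\Hilb^n(S),\Z)$, hence carrying $v$ (up to sign) to $(1,0,1-n)$. I then set $w$ to be the preimage of $[\O_S]=(1,0,0)$; since $[\O_S]$ is algebraic on $S$ it lies in $H^*(S,\Z)_\alg$, so $w \in \tilde\Lambda_\alg$, and the two numerical conditions $w^2=0$, $v.w=-1$ follow from the computation above together with the sign adjustment I made on $v$.

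For (a) $\Rightarrow$ (b), the strategy is to produce a Hodge isometry $\tilde\Lambda \to H^*(S,\Z)$ sending $v \mapsto (1,0,1-n)$ and $w \mapsto (1,0,0)$, for then Theorem \ref{markman_thm}(d) identifies $Y$ birationally with the moduli space of sheaves with Mukai vector $(1,0,1-n)$, namely $\Hilb^n(S)$. First, since $\tilde\Lambda_\alg$ contains $w$ with $w^2=0$ and $v.w=-1$, the sublattice $\langle v,w\rangle$ is a copy of $U$ inside $\tilde\Lambda_\alg$ (its Gram matrix has determinant $-1$ after the change of basis $v \mapsto v, w \mapsto w$ giving $\left(\begin{smallmatrix} 2n-2 & -1 \\ -1 & 0\end{smallmatrix}\right)$, which represents $0$ and is unimodular, hence isometric to $U$). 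In particular condition (a) of Proposition \ref{moduli_space_criterion} holds, so there is a projective K3 surface $S$ with $T_Y \cong T_S$ and, as in that proof, a Hodge isometry $\tilde\phi\colon \tilde\Lambda \to H^*(S,\Z)$. The remaining task is to post-compose $\tilde\phi$ with an auto-isometry of $H^*(S,\Z)$ that moves the image pair $(\tilde\phi(v),\tilde\phi(w))$ to the standard pair $((1,0,1-n),(1,0,0))$. Both pairs span a copy of $U$ in the algebraic lattice $H^*(S,\Z)_\alg$, and both have the correct mutual intersection numbers, so the existence of such an auto-isometry is a question about extending an isometry between two copies of $U$ in $H^*(S,\Z)$ to a global Hodge auto-isometry.

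The main obstacle is exactly this last extension step. It is not enough that the two embedded copies of $U$ be abstractly isometric; I need an auto-isometry of the full Mukai lattice $H^*(S,\Z) \cong U^4 \oplus (-E_8)^2$ that realizes the prescribed map on the chosen copy of $U$, fixes the Hodge structure, and acts as $+1$ (not $-1$) on the transcendental part so that stability and effectivity are preserved. I expect to handle this using Eichler's criterion / the theory of the orthogonal group of a lattice containing two orthogonal hyperbolic planes, in the same spirit as the appeal to \cite[Prop.~3.8]{orlov} in the previous proof: because $U^{\perp}$ of the chosen copy of $U$ itself still contains a $U$, any two primitive copies of $U$ in $H^*(S,\Z)$ with isometric orthogonal complements are related by an auto-isometry, and one checks both orthogonal complements are the Mukai lattice of the \emph{same} transcendental Hodge structure. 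A point to watch is the sign/effectivity issue already flagged in Proposition \ref{moduli_space_criterion}: after arranging $\tilde\phi(v) = (1,0,1-n)$ I must verify the image is genuinely the Mukai vector of $\O$-ideal sheaves and not its negative, reducing via the same reflection-and-$\exp(c)$ maneuvers to the case of positive rank, so that the resulting moduli space is nonempty and of the expected dimension.
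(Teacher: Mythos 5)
Your choice of the isotropic vector is wrong, and the error breaks both directions. With the Mukai pairing $\langle (r,c,s),(r',c',s')\rangle = c.c' - rs' - sr'$, your proposed $w = (1,0,0)$ satisfies $(1,0,0).(1,0,1-n) = n-1$, not $-1$; the phrase ``after accounting for the Mukai pairing sign convention'' cannot rescue this, since $n-1 \ge 1$ for $n \ge 2$. (Also, $(1,0,0)$ is not the Mukai vector of $\O_S$, which is $(1,0,1)$.) The vector that works is the class of a point, $w = (0,0,1)$: indeed $(0,0,1)^2 = 0$ and $(1,0,1-n).(0,0,1) = -1$. In (b) $\Rightarrow$ (a) this is a one-line fix, but in (a) $\Rightarrow$ (b) it invalidates your stated goal: no isometry whatsoever can carry the pair $(v,w)$ with $v.w = -1$ to the pair $\bigl((1,0,1-n),(1,0,0)\bigr)$, whose pairing is $n-1$. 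The target pair must be $\bigl((1,0,1-n),(0,0,1)\bigr)$.

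Even with that correction, your (a) $\Rightarrow$ (b) remains incomplete at exactly the point you flag as ``the main obstacle'': you need an auto-isometry of $H^*(S,\Z)$, compatible with the Hodge structure, carrying $(\tilde\phi(v),\tilde\phi(w))$ to the standard pair, and this requires a \emph{Hodge} isometry between the orthogonal complements of the two embedded copies of $U$ --- something Eichler's criterion does not supply by itself (it produces isometries with no control on the period). The paper's proof avoids the extension problem entirely: from $w$ one forms $e := v + (n-1)w$ and $f := -w$, which satisfy $e^2 = f^2 = 0$ and $e.f = 1$, hence span the sublattice $\langle v,w\rangle \cong U$ in $\tilde\Lambda_\alg$, with $v = e + (n-1)f$. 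Setting $L = \langle v,w\rangle^\perp$ and running the Torelli-plus-projectivity argument of Proposition \ref{moduli_space_criterion} on \emph{this particular} copy of $U$ produces a projective K3 surface $S$ with $H^2(S,\Z)$ Hodge-isometric to $L$; the desired Hodge isometry $\tilde\Lambda = U \oplus L \to H^*(S,\Z)$ is then simply the direct sum of the map sending $e \mapsto (1,0,0)$, $f \mapsto (0,0,-1)$ with the given isometry $L \to H^2(S,\Z)$, and it automatically takes $v$ to $(1,0,1-n)$. No post-composition by an auto-isometry, and hence no extension argument, is ever needed; I recommend restructuring your argument along these lines.
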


\begin{proof}
(b) $\Rightarrow$ (a): This is immediate from Theorem \ref{markman_thm}, since $\Hilb^n(S)$ is the moduli space of sheaves with Mukai vector $v = (1,0,1-n) \in H^*(S,\Z)$; take $w = (0,0,1)$.

\smallskip
(a) $\Rightarrow$ (b): Observe that $e := v+(n-1)w$ and $f := -w$ satisfy $e^2 = f^2 = 0$ and $e.f = 1$, so they span a copy of $U$ in $\tilde\Lambda_\alg$.  Let $L = U^\perp = \langle v,w \rangle^\perp \subset \tilde\Lambda$.  As in the proof of Proposition \ref{moduli_space_criterion}, there is a projective K3 surface $S$ such that $H^2(S,\Z) \cong L$.  Thus we can produce a Hodge isometry from $\tilde\Lambda = U \oplus L$ to $H^*(S,\Z)$ that takes $v$ to $(1,0,1-n)$, so $Y$ is birational to $\Hilb^n(S)$ by Theorem \ref{markman_thm}.
\end{proof}

\section{The Markman--Mukai lattice of \texorpdfstring{$F$}{F}} \label{at-mukai}

Recall that $X$ is a smooth cubic fourfold and $F$ is the variety of lines on $X$.  Kuznetsov has observed that the triangulated category
\begin{align*}
\A &:= \langle \O_X, \O_X(1), \O_X(2) \rangle^\perp \subset D^b(\Coh(X)) \\
&:= \{ E \in D^b(\Coh(X)) : \Ext^*(\O_X(i),E)=0\ \mathrm{for}\ i=0,1,2 \}
\end{align*}
is like the derived category of a K3 surface in that it has the same Serre functor and Hochschild homology and cohomology, and has conjectured that $X$ is rational if and only if $\A$ is equivalent to the derived category of an actual K3 surface \cite{kuznetsov}.  By \cite{at}, this is essentially equivalent to having $X \in \mathcal C_d$ for some $d$ satisfying \eqref{condition**}.

Let $\Knum(\A)$ be the numerical Grothendieck group of $\A$, that is, $K(\A)$ modulo the kernel of the Euler pairing.  Let $\lambda_1, \lambda_2 \in \Knum(\A)$ be the classes of the projections of $\O_L(1)$ and $\O_L(2)$ into $\A$, where $L$ is any line on $X$.  The Euler pairing on the sublattice $\langle \lambda_1, \lambda_2 \rangle$ is $-A_2 = \left( \begin{smallmatrix} -2 & 1 \\ 1 & -2 \end{smallmatrix} \right)$.

A Mukai lattice for $\A$ was introduced in \cite[Def.~2.2]{at}:
\[ \Ktop(\A) := \{ \kappa \in \Ktop(X) : \chi([\O_X(i)], \kappa) = 0\ \mathrm{for}\ i=0,1,2 \}. \]
Here $\Ktop(X)$ is the Grothendieck group of topological vector bundles and $\chi$ is the Euler pairing, which is integer-valued and extends the Euler pairing on $\Knum(X)$.  It has a Hodge structure of K3 type pulled back via the Chern character or the Mukai vector
\[ \Ktop(\A) \otimes \mathbb C \hookrightarrow \bigoplus H^{2i}(X,\mathbb C)(i). \]
In \cite{at} this was called a weight-two Hodge structure, but it should really be called weight-zero.  We will need the following properties:
\begin{thm}[Addington, Thomas {\cite[\S\S2.3--2.4]{at}}] \label{at-mukai-properties} \ 
\begin{enumerate}
\item As a lattice, $\Ktop(\A)$ is isomorphic to $U^4 \oplus {E_8}^2$.  
\item The algebraic part of $\Ktop(\A)$ is isomorphic to $\Knum(\A)$.
\item $\langle \lambda_1, \lambda_2 \rangle^\perp \subset \Ktop(\A)$ is Hodge-isometric to $H^4_\prim(X,\Z)(2)$.
\item $X \in \mathcal C_d$ if and only if there is a primitive sublattice $M \subset \Knum(\A)$ of rank 3 and discriminant $d$ that contains $\lambda_1$ and $\lambda_2$.
\end{enumerate}
\end{thm}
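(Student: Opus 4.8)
The plan is to carry out everything inside $\Ktop(X)$ equipped with its Euler pairing $\chi(\kappa,\kappa') = \int_X \ch(\kappa)^\vee\,\ch(\kappa')\,\td(X)$, which is unimodular by Poincar\'e duality, and to realize $\Ktop(\A)$ as the orthogonal complement of the three exceptional classes $[\O_X],[\O_X(1)],[\O_X(2)]$. Parts (a) and (b) will then follow from the K3-type structure of $\A$ together with standard facts about cubic fourfolds, while (c) and (d) rest on an explicit Chern-character identification. I would prove the four statements in the order (a), (b), (c), (d), since (d) builds on the identifications made in (b) and (c).

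For \emph{(a)}, note that $X$ has torsion-free cohomology concentrated in even degree with Betti numbers $1,1,23,1,1$, so $\Ktop(X)$ is free of rank $27$. The classes $[\O_X],[\O_X(1)],[\O_X(2)]$ form an exceptional collection, hence their Euler-pairing Gram matrix is unitriangular and they span a rank-$3$ unimodular sublattice $E$; therefore $\Ktop(\A)=E^\perp$ is unimodular of rank $24$. On $\A$ the Serre functor is a shift by $2$, so the Euler pairing restricts to a \emph{symmetric} form on $\Ktop(\A)$, and this form is even because it is the Mukai pairing of a K3-type category. Its signature is $(20,4)$: the Euler form has signature $(23,4)$ on $\Ktop(X)\otimes\mathbb{R}$ (the $(21,2)$-form on $H^4(X)$ together with two hyperbolic planes from $H^0$--$H^8$ and $H^2$--$H^6$), while the exceptional classes span a positive-definite rank-$3$ piece. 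Since an indefinite even unimodular lattice is determined by its signature, $\Ktop(\A)\cong U^4\oplus{E_8}^2$. For \emph{(b)}, the natural map $\Knum(\A)\to\Ktop(\A)$ lands in the algebraic part, i.e.\ the integral classes orthogonal to $H^{3,1}(X)$, and it is injective because the Euler pairing already detects numerical equivalence. Surjectivity onto $\Ktop(\A)_\alg$ is exactly the assertion that every integral Hodge class of the relevant degree is the Mukai vector of an object of $\A$, which uses the integral Hodge conjecture for cubic fourfolds; thus $\Knum(\A)\cong\Ktop(\A)_\alg$.

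For \emph{(c)}, computing the Chern characters of the projections $\lambda_1,\lambda_2$ of $\O_L(1),\O_L(2)$ recovers the form $-A_2$ on $\langle\lambda_1,\lambda_2\rangle$, and one checks that the Chern character carries $\langle\lambda_1,\lambda_2\rangle^\perp\subset\Ktop(\A)$ isomorphically onto $H^4_\prim(X,\Z)$, matching the Euler form to the intersection form up to the sign and weight shift recorded by the Tate twist $(2)$, and matching the two Hodge structures (with $H^{3,1}(X)$ in the $(2,0)$-slot). For \emph{(d)}, intersect (c) with the algebraic part: by (b) this gives $\langle\lambda_1,\lambda_2\rangle^\perp\cap\Knum(\A)\cong H^{2,2}_\prim(X,\Z)(2)$, so a primitive rank-$3$ sublattice $M\ni\lambda_1,\lambda_2$ of $\Knum(\A)$ corresponds to a primitive rank-$2$ sublattice $K\ni h^2$ of $H^{2,2}(X,\Z)$, the extra rank-one directions matching under this dictionary. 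Their discriminants coincide because the two base pieces $\langle\lambda_1,\lambda_2\rangle=-A_2$ and $\langle h^2\rangle$ both have discriminant $3$ and discriminant group $\Z/3$, while $\Ktop(\A)$ and $H^4(X,\Z)$ are unimodular, so the overlattice structure of $M$ and $K$ is governed by the same $\Z/3$ on both sides. Hence $X\in\mathcal C_d$ precisely when such an $M$ of discriminant $d$ exists.

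The main obstacle is \emph{(c)}: making the Chern-character comparison an isometry over $\Z$ rather than merely over $\Q$, with the correct sign and Tate twist, and verifying that it respects the Hodge structures. Once that integral identification is secured, the discriminant bookkeeping in (d) is routine lattice theory, and (a) and (b) follow from the general structural facts above.
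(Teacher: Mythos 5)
First, a point of order: the paper does not prove this statement at all --- it is quoted verbatim, with attribution, from Addington--Thomas \cite[\S\S2.3--2.4]{at}, so the only fair comparison is with the arguments given there. Your blueprint is in fact close to the real one: $\Ktop(\A)$ is indeed treated as the orthogonal complement of the unimodular sublattice spanned by $[\O_X],[\O_X(1)],[\O_X(2)]$ inside the unimodular lattice $\Ktop(X)$ (so unimodularity of $\Ktop(\A)$ follows exactly as you say), symmetry of the restricted pairing does come from the Serre functor of $\A$ being $[2]$, part (b) does rest on Voisin's integral Hodge conjecture for cubic fourfolds, part (c) is an explicit Mukai-vector/Chern-character computation, and part (d) is lattice bookkeeping using unimodularity of the ambient lattices (your ``same $\Z/3$ glue'' remark is cleanest phrased as: $M\mapsto M^\perp\subset\langle\lambda_1,\lambda_2\rangle^\perp\cong H^4_\prim(X,\Z)$, $K:=(M^\perp)^\perp\subset H^4(X,\Z)$, and a primitive sublattice of a unimodular lattice has the same discriminant, up to sign, as its orthogonal complement).

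The genuine gap is in your proof of (a), specifically the signature computation, and it forces a different logical order. The Euler pairing on all of $\Ktop(X)$ is \emph{not} symmetric --- Serre duality gives $\chi(E,F)=\chi(F,E\otimes\O_X(-3))$ since $\omega_X\cong\O_X(-3)$, and the Serre functor of $D^b(\Coh(X))$ is not a shift --- so ``the Euler form has signature $(23,4)$ on $\Ktop(X)\otimes\mathbb{R}$'' is not a meaningful statement (the symmetrization of such a pairing can even be degenerate). Worse, your claim that the exceptional classes span a positive-definite rank-3 piece is false: $\chi(\O_X,\O_X(1))=6$ and $\chi(\O_X,\O_X(2))=21$, so the symmetrized Gram matrix of $[\O_X],[\O_X(1)],[\O_X(2)]$ contains the $2\times 2$ block $\left(\begin{smallmatrix}2&6\\6&2\end{smallmatrix}\right)$ of determinant $-32$, which is indefinite; and in any case signatures do not add across a decomposition that is only semiorthogonal. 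Your evenness claim (``because it is the Mukai pairing of a K3-type category'') is likewise circular --- evenness is part of what must be proved. Both defects are repaired by establishing (c) \emph{first}: once one knows $\langle\lambda_1,\lambda_2\rangle\cong -A_2$ and $\langle\lambda_1,\lambda_2\rangle^\perp\cong H^4_\prim(X,\Z)(2)$, the direct sum of these is an even lattice of signature $(0,2)+(20,2)=(20,4)$ sitting inside $\Ktop(\A)$ with index dividing $3$ (both pieces have discriminant $3$ and $\Ktop(\A)$ is unimodular), and evenness and signature pass to an overlattice of odd index; then the classification of indefinite even unimodular lattices gives $U^4\oplus {E_8}^2$. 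So the computation underlying (c) is not merely ``the main obstacle'' to be deferred --- it is the input without which your proof of (a) does not get off the ground, and the order of argument must be: unimodularity and symmetry, then (c), then (a), then (b) and (d).
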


\begin{prop}
Let $P \subset F \times X$ be the universal line and $p\colon P \to F$ and $q\colon P \to X$ the two projections.  Then the map $\phi$ from ${\lambda_1}^\perp \subset \Ktop(\A)$ to $H^2(F,\Z)(1)$ defined by $\phi(\kappa) = c_1(p_* q^* \kappa)$ is a Hodge isometry.
\end{prop}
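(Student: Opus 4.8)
The plan is to realize $\phi$ as the degree-two part of a Fourier--Mukai transform and to pin it down on two complementary pieces: the transcendental part, where it is the classical Beauville--Donagi cylinder map, and one extra algebraic direction, where it should hit the Pl\"ucker polarization.

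First I would observe that $\phi = c_1 \circ \Phi$, where $\Phi = p_! q^*\colon \Ktop(X) \to \Ktop(F)$ is the Fourier--Mukai transform with kernel $[\O_P]$. Since $c_1$ is additive, $\phi$ is a genuine group homomorphism defined on all of $\Ktop(X)$, automatically integral; the restriction to ${\lambda_1}^\perp$ matters only at the end. Grothendieck--Riemann--Roch gives
\[
\phi(\kappa) = \bigl[\, p_*\bigl(q^*\ch(\kappa)\cdot\td(T_{P/F})\bigr)\,\bigr]_{\deg 2},
\]
and because $F$ is hyperk\"ahler we have $c_1(F)=0$, so $c_1$ of any class on $F$ equals the degree-two part of its Mukai vector. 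Thus $\phi$ factors through $\ch$, which identifies the Hodge structures, followed by an algebraic correspondence $H^*(X)\to H^2(F)$; hence $\phi$ is a morphism of Hodge structures into $H^2(F,\Z)(1)$, indeed a morphism of the underlying variations over $\mathcal C$.

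Next I would treat the transcendental part. A transcendental class $\kappa$ has $\ch(\kappa)$ concentrated in $H^4(X)$, so $\ch_0(\kappa)=\ch_1(\kappa)=0$ and the Todd corrections drop out, leaving $\phi(\kappa)=p_*q^*\ch_2(\kappa)$, which is exactly the Beauville--Donagi cylinder map. By Theorem \ref{at-mukai-properties}(c) the lattice $\langle\lambda_1,\lambda_2\rangle^\perp$ is $H^4_\prim(X,\Z)(2)$, which Beauville and Donagi identify Hodge-isometrically with $H^2_\prim(F,\Z)(1)$; so $\phi$ restricts to a Hodge isometry $\langle\lambda_1,\lambda_2\rangle^\perp \to H^2_\prim(F,\Z)(1)$. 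For very general $X$ this is the whole transcendental lattice, with $\langle\lambda_1,\lambda_2\rangle^\perp = T_X$ and $H^2_\prim(F,\Z)=T_F$.

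It remains to handle the one extra rank, and here lies the main obstacle. The vector $v:=\lambda_1+2\lambda_2$ lies in ${\lambda_1}^\perp$, spans ${\lambda_1}^\perp$ together with $\langle\lambda_1,\lambda_2\rangle^\perp$ up to finite index, and has $v^2=-6$. The crux is to compute $\phi(v)=c_1(\Phi v)$ and show it equals $\pm g$, the Pl\"ucker polarization, which satisfies $q(g)=6$. I expect this to be the only real computation: a Grothendieck--Riemann--Roch calculation, now keeping the Todd terms, using the explicit Chern characters of the projections $\lambda_1,\lambda_2$ (as in \cite{at,km}) together with the standard fact that the cylinder transform of $h^2$ is a multiple of $g$. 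Granting $\phi(v)=\pm g$, for very general $X$ the map $\phi\otimes\Q\colon T_X\oplus\Q v \to T_F\oplus\Q g$ is an isometric isomorphism, the sign being absorbed by the Tate twist. Being an isometry is a flat condition on the variation over $\mathcal C$, so it propagates from very general $X$ to all $X$ by density. Finally $\phi$ is integral and, being an isometry for the nondegenerate form $\chi$, injective; since ${\lambda_1}^\perp$ and $H^2(F,\Z)$ both have rank $23$ and discriminant group $\Z/2$, an injective integral isometry between them is automatically an isomorphism. Hence $\phi$ is a Hodge isometry.
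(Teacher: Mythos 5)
Your proposal reproduces the architecture of the paper's own proof --- the rational orthogonal decomposition of ${\lambda_1}^\perp$ into $\langle \lambda_1+2\lambda_2 \rangle \oplus \langle\lambda_1,\lambda_2\rangle^\perp$, matched against $H^2(F,\Q) = \langle g \rangle \oplus H^2_\prim(F,\Q)$, with Beauville--Donagi plus Theorem \ref{at-mukai-properties}(c) handling the primitive summand and the rank-23, discriminant-2 finite-index argument upgrading a rational isometry to an integral one --- but it has a genuine gap exactly where you write ``I expect this to be the only real computation'': you never establish $\phi(\lambda_1+2\lambda_2)=\pm g$, you only grant it. That computation is not a formality; it is the substance of the proof, and its outcome is delicate. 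A priori $\phi(\lambda_1+2\lambda_2)$ is some integer multiple $cg$, and any value $c \ne \pm 1$ (including $0$) would make the proposition false, so nothing short of an exact evaluation will do. Carrying it out requires three exact inputs: the Chern character $\ch(\lambda_1+2\lambda_2) = -3+3h-\tfrac12 h^2+\dotsb$ from the mutation formulas for $\lambda_1,\lambda_2$; the Todd class $\td(T_p) = 1+\tfrac12(2q^*h-p^*g)+\dotsb$, obtained by identifying $T_p = \O_{\P S}(2)\otimes p^*\det S$ via the Euler sequence on $P = \P S$; and the pushforwards $p_*q^*h^2 = g$ \emph{and} $p_*q^*h = 1$. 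Your sketch invokes only ``the cylinder transform of $h^2$ is a multiple of $g$,'' which is too weak (the multiple is precisely what is at stake), and it omits $p_*q^*h = 1$ entirely --- a genuinely geometric fact, which the paper proves by observing that $q^{-1}(X\cap H)$ is the blow-up of $F$ along the surface of lines in the cubic threefold $X \cap H$, hence generically $1$-to-$1$ over $F$. In the end the coefficient of $g$ emerges from a cancellation among the rank, degree-2, and degree-4 terms of $\ch(\lambda_1+2\lambda_2)$ weighted against the Todd terms, so there is no soft or formal argument forcing $c = \pm 1$; only the computation decides it.

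Two smaller points. First, your detour through very general $X$ and a density/flatness argument is sound but unnecessary: the orthogonal splittings above hold over $\Q$ for every smooth cubic, and $\phi$ respects them for every $X$, so once both summands are handled there is nothing to propagate. Second, your claim that a class in $\langle\lambda_1,\lambda_2\rangle^\perp$ has Chern character concentrated in $H^4_\prim(X,\Q)$ (so that the Todd corrections drop out of $\phi$ there) is correct and is also how the paper argues, but it is not obvious from the definitions --- it is the content of \cite[Prop.~2.3]{at} and should be cited as such.
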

\begin{proof}
Both ${\lambda_1}^\perp$ and $H^2(F,\Z)(1)$ are lattices of rank 23 and discriminant 2.  It is enough to show that $\phi$ is a Hodge isometry when tensored with $\Q$; a priori this only implies that $\phi$ embeds ${\lambda_1}^\perp$ as a finite-index sublattice of $H^2(F,\Z)(1)$, but since they have the same discriminant the index must in fact be 1.

By the Riemann--Roch formula \cite[\S3]{ah}, $\phi(\kappa)$ is the degree-2 part of
\begin{equation} \label{rr}
p_*(q^*(\ch(\kappa)) \cup \td(T_p)),
\end{equation}
where $T_p$ is the relative tangent bundle of the $\P^1$-bundle $p\colon P \to F$.  First we calculate $\td(T_p)$.  Let $h \in H^2(X,\Z)$ be the hyperplane class.  Let $S$ be the restriction to $F$ of the tautological sub-bundle on $\Gr(2,6)$.  Then $g := -c_1(S) \in H^2(F,\Z)$ is the hyperplane class in the Pl\"ucker embedding.  The universal line $P$ is the projectivization $\P S$, and $\O_{\P S}(1) = q^* \O_X(1)$.  Since $T_p$ is line bundle, we can take determinants in the Euler sequence
\[ 0 \to \O_{\P S} \to \O_{\P S}(1) \otimes p^* S \to T_p \to 0 \]
to get $T_p = \O_{\P S}(2) \otimes p^* \det S$.  Thus
\begin{equation} \label{todd}
\td(T_p) = 1 + \tfrac12 (2q^* h - p^* g) + \tfrac1{12} (2q^* h - p^* g)^2 + \dotsb.
\end{equation}

The orthogonal to $\lambda_1$ in $\langle \lambda_1, \lambda_2 \rangle$ is generated by $\lambda_1 + 2\lambda_2$.  Since we are tensoring with $\Q$, we have orthogonal direct sums
\begin{gather}
{\lambda_1}^\perp = \langle \lambda_1 + 2\lambda_2 \rangle \oplus \langle \lambda_1, \lambda_2 \rangle^\perp \label{lambda_perp_splitting} \\
H^2(F,\Q) = \langle g \rangle \oplus H^2_\prim(F,\Q). \label{H2F_splitting}
\end{gather}
By \cite[Prop.~2.3]{at}, the Chern character\footnote{In fact \cite[Prop.~2.3]{at} says that the Mukai vector gives such a Hodge isometry, but since $\td(X)$ is a polynomial in $h$, multiplying by $\sqrt{\td(X)}$ does not affect $H^4_\prim(X,\Q)$.} gives a Hodge isometry from the second summand of \eqref{lambda_perp_splitting} to $H^4_\prim(X,\Q)(2)$.  By \cite[Prop.~6]{bd}, $p_* q^*$ gives a Hodge isometry from this to the second summand of \eqref{H2F_splitting}.  Since the degree-0 part of $\td(T_p)$ is $1$, we see that for $\alpha \in H^4(X,\Q)$, the degree-2 part of $p_*(q^* \alpha \cup \td(T_p))$ is just $p_* q^* \alpha$.  Thus $\phi$ gives a Hodge isometry from the second summand of \eqref{lambda_perp_splitting} to the second summand of \eqref{H2F_splitting}.

For the first summands of \eqref{lambda_perp_splitting} and \eqref{H2F_splitting}, observe that the Euler square of $\lambda_1 + 2\lambda_2$ is $-6$, and by \cite[\S2.1]{hassett} we have $q(g) = -6$ as well (the minus sign comes because we have twisted down to weight zero).  Thus it is enough to show that
\begin{equation} \label{desired}
\phi(\lambda_1 + 2\lambda_2) = g.
\end{equation}
To calculate $\ch(\lambda_1 + 2 \lambda_2)$, recall that $\lambda_i$ is the class of the left mutation of $\O_L(i)$ past $\O_X(2)$, $\O_X(1)$, and $\O_X$, where $L$ is any line on $X$, so a straightforward calculation gives
\begin{align*}
\lambda_1 &= [\O_L(1)] - [\O_X(1)] + 4[\O_X] \\
\lambda_2 &= [\O_L(2)] - [\O_X(2)] + 4[\O_X(1)] - 6[\O_X]
\end{align*}
and thus
\[ \ch(\lambda_1 + 2 \lambda_2) = -3 + 3h - \tfrac12 h^2 + \dotsb. \]
By \cite[\S2.1]{hassett} we have $p_* q^* h^2 = g$.  We also have $p_* q^* h = 1$: to see this, take a smooth hyperplane section $X \cap H$ and take its preimage under $q$; this is the blow-up of $F$ along the surface of lines contained in the cubic threefold $X \cap H$, hence is generically 1-to-1 over $F$.  Combining these facts with \eqref{rr} and \eqref{todd} we get \eqref{desired}.
\end{proof}

\begin{cor} \label{the_corollary}
The embedding $H^2(F,\Z) \subset \Ktop(\A)(-1)$ given by the previous proposition can be identified with Markman's embedding $H^2(F,\Z) \subset \tilde\Lambda$ discussed in \S\ref{markman-mukai}.
\end{cor}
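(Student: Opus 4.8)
The plan is to show directly that the two primitive embeddings of $H^2(F,\Z)$ into the Mukai-type lattice $U^4 \oplus (-E_8)^2$ coincide. On the one hand, Markman's embedding $H^2(F,\Z) \subset \tilde\Lambda$ has orthogonal complement generated by a primitive vector $v$ with $v^2 = 2n-2 = 2$, by Theorem \ref{markman_thm}(a,b). On the other hand, the previous proposition identifies $H^2(F,\Z)(1)$ with $\lambda_1^\perp \subset \Ktop(\A)$, so after twisting by $(-1)$ the complement of $H^2(F,\Z)$ in $\Ktop(\A)(-1)$ is $\langle \lambda_1 \rangle$. Here $\lambda_1$ is primitive (if $\lambda_1 = k\mu$ with $k>1$ then $\mu^2 = -2/k^2 \notin \Z$), and $\lambda_1^2 = -2$ becomes $+2$ after the twist, matching $v^2$. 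Moreover $\lambda_1 \in \Knum(\A) = \Ktop(\A)_\alg$ is of type $(1,1)$, just as $v$ is. So it suffices to produce a lattice isometry $\Psi\colon \Ktop(\A)(-1) \to \tilde\Lambda$ restricting to the identity on $H^2(F,\Z)$ and sending $\lambda_1$ to $\pm v$: such a $\Psi$ is then automatically a Hodge isometry, since the previous proposition shows $\phi$ matches the Hodge structures on $\lambda_1^\perp(-1)$ and $H^2(F,\Z)$, while the remaining rank-one summand is algebraic on both sides.

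To build $\Psi$, I would compare the two lattices as even unimodular overlattices of $H^2(F,\Z) \oplus \langle 2\rangle$. The discriminant group of $H^2(F,\Z) \cong U^3 \oplus (-E_8)^2 \oplus \langle -2\rangle$ is $\Z/2$ with quadratic form $-\tfrac12 \bmod 2\Z$, while that of $\langle 2\rangle$ is $\Z/2$ with form $\tfrac12 \bmod 2\Z$. An even unimodular overlattice corresponds to an isotropic subgroup of order $2$ in the total discriminant group $\Z/2 \oplus \Z/2$; the only nonzero isotropic element is the diagonal one, since $-\tfrac12 + \tfrac12 = 0$, so the gluing is unique. Hence both overlattices arise from the same glue map, and the pair (identity on $H^2(F,\Z)$, $\lambda_1 \mapsto v$) respects it, which yields the desired $\Psi$. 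This is a special case of Nikulin's analysis of primitive embeddings; the signature equality $l_- = t_- = 20$ is what keeps one from quoting his uniqueness theorem verbatim, but the rank-one complement makes the glue computation immediate.

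The one point that requires care — and that I would flag as the crux — is that the corollary asks for the \emph{identity} on $H^2(F,\Z)$ to extend, not merely for the two embeddings to be abstractly isomorphic. This is exactly why the uniqueness of the glue map matters: it forces the two overlattice structures to coincide, so that no nontrivial automorphism of $H^2(F,\Z)$ must be introduced to match them. Once $\Psi$ is in hand the Hodge-isometry claim is formal: $\Psi$ fixes $H^2(F,\Z)$ and hence preserves its $(2,0)$-part, which is the full transcendental part of each Mukai-type Hodge structure, while it sends the remaining algebraic class $\lambda_1$ to $v$. Therefore the embedding $H^2(F,\Z) \subset \Ktop(\A)(-1)$ is identified with Markman's.
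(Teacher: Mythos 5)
Your proof is correct, but it takes a genuinely different route from the paper. The paper disposes of the corollary in one line by citing Markman's theorem \cite[\S4.1]{integral_constraints}: for $n=2$ (or, more generally, when $n-1$ is a prime power) any two primitive embeddings of $H^2(Y,\Z)$ into $U^4 \oplus (-E_8)^2$ differ by an automorphism of the latter, and applying this to the two embeddings at hand gives the identification at once. You instead prove the required uniqueness by hand in the case $n=2$: both $\tilde\Lambda$ and $\Ktop(\A)(-1)$ are even unimodular index-two overlattices of $H^2(F,\Z) \oplus \langle 2 \rangle$, the glue subgroup must be generated by the unique nonzero isotropic element of the discriminant group $\Z/2 \oplus \Z/2$ (the diagonal one, since $-\tfrac12 + \tfrac12 = 0$), and the isometry $\mathrm{id} \oplus (\lambda_1 \mapsto v)$ preserves this glue, hence extends to an isometry of the overlattices by Nikulin's extension criterion. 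Your supporting checks are all sound: $\lambda_1$ is primitive, its square becomes $+2$ in $\Ktop(\A)(-1)$, matching $v^2 = 2n-2 = 2$, and the extension is automatically a Hodge isometry because the $(2,0)$-parts of both lattices lie in $H^2(F,\mathbb C)$ while the rank-one complements are algebraic. You also correctly diagnose why Nikulin's general uniqueness theorem for primitive embeddings cannot be quoted verbatim (the strict signature inequality fails, as $t_- = l_- = 20$), which is exactly the gap that the paper's citation of Markman is designed to fill. What your argument buys: it is self-contained and elementary, exploiting the rank-one complement, and it makes explicit two points the paper leaves implicit, namely that the identification restricts to the identity on $H^2(F,\Z)$ and is a Hodge isometry. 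What the paper's citation buys: brevity, and a statement valid for all $n$ with $n-1$ a prime power, though only $n=2$ is needed here.
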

\begin{proof}
If $n=2$ or if $n-1$ is a prime power then for any $Y$ of \Kthreen-type, any two primitive embeddings of $H^2(Y,\Z)$ into $U^4 \oplus (-E_8)^2$ differ by an automorphism of the latter \cite[\S4.1]{integral_constraints}.
\end{proof}

\section{Proofs of Theorems 1 and 2} \label{proof_of_theorems}

{\renewcommand{\thethm}{\ref{theorem**}}
\addtocounter{thm}{-1}
\begin{thm}
The following are equivalent:
\begin{enumerate}
\item $X \in \mathcal C_d$ for some $d$ satisfying \eqref{condition**}.
\item The transcendental lattice $T_X \subset H^4(X,\Z)$ is Hodge-isometric to $T_S(-1)$ for some K3 surface $S$.
\item $F$ is birational to a moduli space of stable sheaves on $S$.
\end{enumerate}
\end{thm}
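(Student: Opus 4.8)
The plan is to transport the whole statement into the Markman--Mukai lattice of $F$, where all three conditions become lattice-theoretic and Proposition~\ref{moduli_space_criterion} applies verbatim. Since $F$ is of \Kthreen-type with $n=2$ (Beauville--Donagi \cite{bd}), Corollary~\ref{the_corollary} identifies its Markman--Mukai lattice as $\tilde\Lambda_F \cong \Ktop(\A)(-1)$, with algebraic part $\tilde\Lambda_{F,\alg} \cong \Knum(\A)(-1)$ by Theorem~\ref{at-mukai-properties}(b). Moreover, because $\lambda_1$ and $\lambda_2$ are algebraic, the transcendental part of $\Ktop(\A)$ coincides with that of $\langle\lambda_1,\lambda_2\rangle^\perp$, which is $T_X(2)$ by Theorem~\ref{at-mukai-properties}(c); twisting by $(-1)$ yields a Hodge isometry $T_F \cong T_X(-1)$.

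With this dictionary, the equivalence (b)$\,\Leftrightarrow\,$(c) is immediate from Proposition~\ref{moduli_space_criterion} applied to $Y=F$: condition (b) says $T_X \cong T_S(-1)$, equivalently $T_F \cong T_S$, which is part (b) of that proposition for $F$, hence equivalent to its part (c), namely that $F$ is birational to a moduli space of stable sheaves on $S$ --- exactly (c). The same proposition reduces the remaining equivalence (a)$\,\Leftrightarrow\,$(b) to showing that (a) holds if and only if $\tilde\Lambda_{F,\alg}$ contains a hyperbolic plane, and since $U(-1)\cong U$ this is the same as requiring $\Knum(\A) \supset U$.

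It therefore remains to prove the purely lattice-theoretic statement that $X \in \mathcal C_d$ for some $d$ satisfying \eqref{condition**} if and only if $\Knum(\A) \supset U$. By Theorem~\ref{at-mukai-properties}(d) the left-hand side asserts the existence of a primitive rank-3 sublattice $M \supset \langle\lambda_1,\lambda_2\rangle$ of discriminant $d$, and by the characterization recalled before Theorem~\ref{theorem**} the condition \eqref{condition**} means $d$ is the norm of a primitive vector of $A_2$. The bridge between the two sides is the computation that, for any isotropic $w \in \Knum(\A)$, writing $a = \lambda_1.w$ and $b = \lambda_2.w$, the Gram determinant of $\langle\lambda_1,\lambda_2,w\rangle$ is
\[ \det\begin{pmatrix} -2 & 1 & a \\ 1 & -2 & b \\ a & b & 0 \end{pmatrix} = 2(a^2 + ab + b^2), \]
which is precisely the $A_2$-norm of $(a,-b)$. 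Thus the discriminants realized by rank-3 extensions of $\langle\lambda_1,\lambda_2\rangle$ by an isotropic class are exactly the norms of $A_2$, the \eqref{condition**}-discriminants corresponding to primitive $(a,b)$. Given $U = \langle w,w'\rangle \subset \Knum(\A)$ I would set $M = \langle\lambda_1,\lambda_2,w\rangle$ and read off a \eqref{condition**}-discriminant; conversely, from such an $M$ --- which is indefinite of rank 3, hence contains an isotropic vector --- I would extract a $w$ spanning a hyperbolic plane.

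The main obstacle is the interplay of primitivity and divisibility across this bridge. To conclude (a) I must choose $w$, equivalently $(a,b)$, so that the discriminant is the norm of a \emph{primitive} $A_2$-vector and $M$ is saturated in $\Knum(\A)$; to produce a genuine $U$-summand rather than a bare isotropic vector I must arrange that $w$ have divisibility one. Controlling both simultaneously is the delicate point, and I expect it to require the standard bookkeeping with the discriminant form of $A_2 \cong \Z/3$, paralleling the lattice computations of \cite{at}.
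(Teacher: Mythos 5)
Your reduction is exactly the paper's: identify the Markman--Mukai lattice of $F$ with $\Ktop(\A)(-1)$ via Corollary~\ref{the_corollary}, note $T_F \cong T_X(-1)$, feed both into Proposition~\ref{moduli_space_criterion}, and thereby boil the theorem down to the single lattice-theoretic claim that (a) holds if and only if $\Knum(\A)$ contains a copy of $U$. The difference is that the paper does not prove this last claim at all --- it quotes it wholesale from \cite[Thm.~3.1]{at} --- whereas you attempt to prove it from scratch, and that attempt has a genuine gap: what you defer as ``standard bookkeeping'' with primitivity and divisibility is in fact the entire content of the cited theorem, and one step you assert outright is false.

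Concretely: the claim that $M$, ``being indefinite of rank 3, hence contains an isotropic vector'' is not a valid principle --- Meyer's theorem requires rank at least $5$, and rank-$3$ indefinite forms such as $x^2+y^2-3z^2$ are anisotropic over $\Q$. Worse, your sketch of (a) $\Rightarrow U$ never invokes the arithmetic of \eqref{condition**}, so if it worked it would work for every special cubic; but for a generic $X \in \mathcal C_8$ the lattice $\Knum(\A)$ is exactly the rank-3 lattice of Lemma~\ref{basis_lemma} with Gram matrix $\left(\begin{smallmatrix} -2 & 1 & 0 \\ 1 & -2 & 1 \\ 0 & 1 & 2 \end{smallmatrix}\right)$, which \emph{does} contain isotropic vectors (for instance $\lambda_1+2\lambda_2+\tau$) and yet contains no hyperbolic plane: reducing mod $2$ one checks that every isotropic vector of this lattice pairs evenly with all three basis vectors, i.e.\ has divisibility $2$. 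Since $8$ fails \eqref{condition**}, this is exactly how it must be, and it shows that the passage from ``isotropic vector'' to ``isotropic vector of divisibility one'' is where \eqref{condition**} does all its work --- it cannot be waved through. The converse bridge leaks in the same place: given $U = \langle w, w'\rangle$, the discriminant $2(a^2+ab+b^2)$ of $\langle \lambda_1,\lambda_2,w\rangle$ can be, say, $8$ (take $(a,b)=(2,0)$), the norm of an \emph{imprimitive} $A_2$-vector, and nothing in your setup ties the saturation index of this sublattice to the primitivity of $(a,-b)$, so the $d$ you ``read off'' need not satisfy \eqref{condition**}. In short, your frame is the paper's frame, but the engine --- the equivalence between (a) and $U \hookrightarrow \Knum(\A)$, i.e.\ \cite[Thm.~3.1]{at} --- is left unproved, and the shortcuts you propose toward it do not survive the $d=8$ test case.
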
}
\begin{proof}
By \cite[Thm.~3.1]{at}, condition (a) holds if and only if $\Knum(\A)$ contains a copy of $U \cong -U$.  Moreover we have $T_X \cong T_F(-1)$.  Thus the theorem follows from Corollary \ref{the_corollary} and Proposition \ref{moduli_space_criterion}.
\end{proof}

To prove Theorem \ref{theorem***} we will have to work in a basis:
\begin{lem} \label{basis_lemma}
If $X \in \mathcal C_d$ then there is a $\tau \in \Knum(\A)$ such that $\langle \lambda_1, \lambda_2, \tau \rangle$ is a primitive sublattice of discriminant $d$ with Euler pairing
\[ \begin{pmatrix}
-2 & 1 & 0 \\
1 & -2 & 0 \\
0 & 0 & 2k
\end{pmatrix}
 \qquad\mathrm{when\ }d = 6k,\ \mathrm{or} \]
\[ \begin{pmatrix}
-2 & 1 & 0 \\
1 & -2 & 1 \\
0 & 1 & 2k
\end{pmatrix}
 \qquad\mathrm{when\ }d = 6k+2. \]
\end{lem}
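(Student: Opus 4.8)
The plan is to start from the primitive rank-3 sublattice $M \subset \Knum(\A)$ of discriminant $d$ containing $\lambda_1, \lambda_2$ supplied by Theorem \ref{at-mukai-properties}(d), and to manufacture the third basis vector $\tau$ inside $M$ by a change of basis. Write $N = \langle \lambda_1, \lambda_2\rangle$, a rank-2 sublattice with Gram matrix $-A_2$ and hence discriminant $3$. The first step is to observe that $N$ is saturated in $M$: if $N'$ denotes its primitive closure and $m = [N':N]$, then $3 = \disc N = m^2 \disc N'$, and since $3$ is squarefree we must have $m = 1$. Consequently $M/N \cong \Z$, so $\lambda_1, \lambda_2$ extend to a $\Z$-basis $\{\lambda_1, \lambda_2, \tau_0\}$ of $M$ for some $\tau_0 \in M$, and it remains only to adjust $\tau_0$.

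In this basis the Gram matrix is $\left(\begin{smallmatrix} -2 & 1 & a \\ 1 & -2 & b \\ a & b & c\end{smallmatrix}\right)$ with $a = \lambda_1.\tau_0$, $b = \lambda_2.\tau_0$, $c = \tau_0^2$, and an expansion along the last column gives $d = \disc M = 2a^2 + 2ab + 2b^2 + 3c$. Replacing $\tau_0$ by $\tau_0 + x\lambda_1 + y\lambda_2$, which keeps $\lambda_1, \lambda_2$ together with the new vector a basis, changes the off-diagonal pair $(a,b)$ by $(-A_2)\left(\begin{smallmatrix} x \\ y\end{smallmatrix}\right)$. Since $-A_2$ has determinant $3$, the image $(-A_2)\Z^2$ has index $3$ in $\Z^2$, so the class of $(a,b)$ in $\Z^2/(-A_2)\Z^2 \cong \Z/3$ is the only obstruction to clearing the off-diagonal entries. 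One checks that $(1,1) \equiv 0$ in this quotient, so the class of $(a,b)$ equals $(a - b) \bmod 3$; equivalently it vanishes precisely when $3 \mid (a-b)$.

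The second step is to match this trichotomy with $d \bmod 6$. Reducing the formula for $d$ modulo $3$ gives $d \equiv 2(a-b)^2 \pmod 3$, so $(a,b) \equiv 0$ exactly when $d \equiv 0 \pmod 3$ and $(a,b) \not\equiv 0$ exactly when $d \equiv 2 \pmod 3$. Because $d$ is even by \eqref{condition*}, these two alternatives are precisely $d \equiv 0 \pmod 6$ and $d \equiv 2 \pmod 6$. When $3 \mid (a-b)$ I choose $x,y$ reducing $(a,b)$ to $(0,0)$; then $d = 3c$ forces $c = 2k$ with $k = d/6$, giving the first matrix. When $3 \nmid (a-b)$ I reduce $(a,b)$ to $(0,1)$: this is possible because $(0,1)$ represents one of the two nonzero classes, and if $(a,b)$ lies in the other one I first replace $\tau_0$ by $-\tau_0$, still a valid basis vector, to switch classes. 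Then $d = 3c + 2$ forces $c = 2k$ with $k = (d-2)/6$, giving the second matrix.

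The argument is essentially a guided reduction of a ternary quadratic form, and no single step is a serious obstacle. The point requiring the most care is the last one, namely arranging the off-diagonal entry to be exactly $(0,1)$ rather than $(1,0)$: the classes $\lambda_1, \lambda_2$ are fixed and cannot simply be interchanged, which is why the sign flip $\tau_0 \mapsto -\tau_0$ is needed. I would also make explicit at the outset that condition \eqref{condition*} supplies the parity of $d$ used to pass from $d = 3c$ (resp.\ $3c+2$) to $c = 2k$.
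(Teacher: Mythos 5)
Your proof is correct and is essentially the paper's argument: both fix $\lambda_1,\lambda_2$ and translate the third basis vector by elements of $\langle \lambda_1, \lambda_2 \rangle$ (plus a sign flip), which the paper does via the explicit substitutions $\tau \mapsto \tau - a\lambda_2$, $\tau \mapsto \tau + b(\lambda_1+2\lambda_2)$, $\tau \mapsto -\tau$, and which you phrase as killing the obstruction class in $\Z^2/(-A_2)\Z^2 \cong \Z/3$ and matching it against $d \bmod 3$ via the determinant formula. The only substantive differences are that you make explicit the basis-extension step (saturation of $\langle \lambda_1, \lambda_2 \rangle$ in $M$ because its discriminant $3$ is squarefree), which the paper's proof leaves implicit, and that the paper gets evenness of the corner entry from evenness of the lattice $\Knum(\A)$ rather than from \eqref{condition*}; both routes are valid.
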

\begin{proof}
By Theorem \ref{at-mukai-properties}(d), we can choose a $\tau \in \Knum(\A)$ such that $\langle \lambda_1, \lambda_2, \tau \rangle$ is a primitive sublattice of discriminant $d$.  Write the Euler pairing as
\[ \begin{pmatrix}
-2 & 1 & a \\
1 & -2 & * \\
a & * & *
\end{pmatrix} \]
for some $a \in \Z$.  Replace $\tau$ with $\tau - a\lambda_2$; then the Euler pairing becomes
\[ \begin{pmatrix}
-2 & 1 & 0 \\
1 & -2 & 3b+c \\
0 & 3b+c & *
\end{pmatrix} \]
for some $b$ and some $-1 \le c \le 1$.  Replace $\tau$ with $\tau + b(\lambda_1+2\lambda_2)$; then the Euler pairing becomes
\[ \begin{pmatrix}
-2 & 1 & 0 \\
1 & -2 & c \\
0 & c & 2k
\end{pmatrix} \]
for some $k$, since $\Knum(\A)$ is an even lattice.  If $c=0$ this has determinant $6k$.  If $c=1$ this has determinant $6k+2$.    If $c=-1$, replace $\tau$ with $-\tau$ to get back to the previous case.
\end{proof}

{\renewcommand{\thethm}{\ref{theorem***}}
\addtocounter{thm}{-1}
\begin{thm}
The following are equivalent:
\begin{enumerate}
\item $X \in \mathcal C_d$ for some $d$ satisfying \eqref{condition***}.
\item $F$ is birational to $\Hilb^2(S)$ for some K3 surface $S$.
\end{enumerate}
\end{thm}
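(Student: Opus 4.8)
Proof proposal for Theorem \ref{theorem*}.**

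The plan is to reduce the theorem to Proposition \ref{hilbert_scheme_criterion} using the explicit basis from Lemma \ref{basis_lemma}. By Corollary \ref{the_corollary}, the Markman--Mukai lattice $\tilde\Lambda$ of $F$ is $\Ktop(\A)(-1)$, and the primitive generator $v$ of $H^2(F,\Z)^\perp$ corresponds, up to sign, to $\lambda_1 \in \Knum(\A)$ (since $H^2(F,\Z)(1) \cong {\lambda_1}^\perp$ by the previous proposition, and $q(v) = 2n-2 = 2$ matches the Euler square $\lambda_1^2 = -2$ after the weight-zero twist). So by Proposition \ref{hilbert_scheme_criterion}, condition (b) holds if and only if there is an algebraic class $w \in \Ktop(\A)_\alg = \Knum(\A)$ with $\lambda_1.w = 1$ and $w^2 = 0$, where I am computing with the Euler pairing on $\Knum(\A)$ (and absorbing the sign on $v$ into $w$). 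The task is thus to show that such a $w$ exists in $\Knum(\A)$ for some $X \in \mathcal C_d$ if and only if $d$ satisfies \eqref{condition***}.

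First I would reduce to the rank-3 lattice $\langle \lambda_1, \lambda_2, \tau \rangle$ supplied by Lemma \ref{basis_lemma}. The existence of a suitable $w$ is really a condition on this sublattice together with its primitive embedding, but the cleanest route is to look for $w$ already inside $M := \langle \lambda_1, \lambda_2, \tau \rangle$: if $X$ is chosen generically in $\mathcal C_d$ then $\Knum(\A) = M$ has rank 3, so any $w$ must lie in $M$ anyway, while for the forward direction a $w$ inside $M$ certainly suffices. Writing $w = x\lambda_1 + y\lambda_2 + z\tau$ and imposing $\lambda_1.w = 1$ and $w^2 = 0$ using the two Gram matrices of Lemma \ref{basis_lemma}, I get two Diophantine equations in $x, y, z$. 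The condition $\lambda_1.w = 1$ reads $-2x + y = 1$, so $y = 2x+1$; substituting into $w^2 = 0$ gives a single quadratic equation relating $x$, $z$ (after eliminating $y$) to the parameter $k$ in the Gram matrix.

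The heart of the matter is then a calculation identifying this quadratic with the Pell-type equation $m^2 - 2da^2 = -3$ that characterizes \eqref{condition***}. I expect the two cases $d = 6k$ and $d = 6k+2$ to be handled in parallel: in each case, after clearing the linear constraint, $w^2 = 0$ becomes a binary quadratic form in the two free variables that represents $0$ nontrivially precisely when its discriminant times $d$ is $-3$ times a square, which is exactly the solvability of $m^2 - 2da^2 = -3$ with $m = 2n+1$. Concretely I would complete the square to isolate the coefficient $2k$ (i.e.\ the norm of $\tau$ modulo the $A_2$-block) and track how the determinant $d$ enters. The main obstacle will be bookkeeping the two Gram matrices simultaneously and confirming that the substitution $m = 2n+1$ lands one exactly on the representation $d \mid 2n^2+2n+2$ rather than on a spurious congruence class; I would verify this against the explicit small-$d$ entries of Table \ref{200}, where \eqref{condition***} is marked, as a sanity check that the equivalence, and not merely an implication, holds.
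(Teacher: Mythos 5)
Your first step coincides with the paper's: by Corollary \ref{the_corollary} and the proposition before it, the generator $v$ of $H^2(F,\Z)^\perp$ is $\pm\lambda_1$, so Proposition \ref{hilbert_scheme_criterion} reduces the theorem to the existence of $w \in \Knum(\A)$ with $\chi(\lambda_1,w)=1$ and $\chi(w,w)=0$. Your plan for the direction (a) $\Rightarrow$ (b) is also essentially the paper's: working in the basis of Lemma \ref{basis_lemma}, one solves the resulting Diophantine condition explicitly (the paper exhibits $w = m\lambda_1+(2m+1)\lambda_2+a\tau$ with $m$ chosen according to $n \bmod 3$), and the congruence bookkeeping you flag as the main obstacle does work out. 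That half of your proposal, once the computation is carried through, is sound and works for every $X \in \mathcal C_d$, since Lemma \ref{basis_lemma} applies to every such $X$.

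The genuine gap is in the direction (b) $\Rightarrow$ (a). You restrict attention to $w$ lying in $M := \langle \lambda_1,\lambda_2,\tau\rangle$ on the grounds that $\Knum(\A)=M$ for $X$ \emph{generic} in $\mathcal C_d$, but the theorem is about an arbitrary cubic: here $X$ is any cubic with $F$ birational to $\Hilb^2(S)$, its lattice $\Knum(\A)$ may have rank $>3$, and---crucially---no $d$ and no sublattice $M$ are given in advance; the $d$ asserted in (a) must be \emph{produced} from $w$. Your formulation ``such a $w$ exists for some $X\in\mathcal C_d$ iff $d$ satisfies \eqref{condition***}'' quantifies in the wrong order (the equivalence must hold for each fixed $X$), and there is no semicontinuity argument that transports a statement about generic members of $\mathcal C_d$ to special ones, since birationality to $\Hilb^2$ is a countable union of closed conditions. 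What is missing is the paper's saturation argument: given any $w\in\Knum(\A)$ with $\chi(\lambda_1,w)=1$ and $\chi(w,w)=0$, set $L=\langle\lambda_1,\lambda_2,w\rangle$, whose Gram matrix is forced to be
\[ \begin{pmatrix} -2 & 1 & 1 \\ 1 & -2 & n \\ 1 & n & 0 \end{pmatrix}, \qquad n := \chi(\lambda_2,w), \]
so that $\disc(L)=2n^2+2n+2$; then let $M$ be the \emph{saturation} of $L$ in $\Knum(\A)$, let $a=[M:L]$, and let $d=\disc(M)$. Then $a^2d=2n^2+2n+2$, so this $d$ satisfies \eqref{condition***}, and $X\in\mathcal C_d$ by Theorem \ref{at-mukai-properties}(d) because $M$ is primitive of rank $3$ and contains $\lambda_1,\lambda_2$. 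This step---letting the discriminant $d$ be determined a posteriori by saturating the lattice generated by $w$, rather than fixing $d$ and $M$ beforehand---is exactly what your genericity assumption cannot replace.
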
}
\begin{proof}
We will show that condition (a) holds if and only if there is a $w \in \Knum(\A)$ such that
\begin{equation} \label{kappa}
\chi(\lambda_1, w) = 1 \mathrm{\ and\ } \chi(w,w) = 0.
\end{equation}
Then the theorem follows from Corollary \ref{the_corollary} and Proposition \ref{hilbert_scheme_criterion}.  

If there is such a $w$, let $L = \langle \lambda_1, \lambda_2, w \rangle \subset \Knum(\A)$.  By hypothesis, the Euler pairing on $L$ is
\[ \begin{pmatrix}
-2 & 1 & 1 \\
1 & -2 & n \\
1 & n & 0
\end{pmatrix} \]
for some $n \in \Z$, so $\disc(L) = 2n^2 + 2n + 2$.  Let $M$ be the saturation of $L$, let $a$ be the index of $L$ in $M$, and let $d = \disc(M)$.  Then $\disc(L) = a^2 d$, and $X \in \mathcal C_d$ by Theorem \ref{at-mukai-properties}(d).

Conversely, suppose $X \in \mathcal C_d$ for some $d$ satisfying \eqref{condition***}.  Choose integers $n$ and $a$ such that
\[ d a^2 = 2n^2 + 2n + 2. \]
Recall that $d$ is even.  Since $2n^2+2n+2$ satisfies \eqref{condition**} we see that $a$ is a product of primes $p \equiv 1 \pmod 3$, and in particular $a \equiv 1 \pmod 3$.  We consider three cases.

\smallskip
Case 1: $n \equiv 1 \pmod 3$.  In this case we find that $d \equiv 0 \pmod 6$.  Write $d = 6k$.  By Lemma \ref{basis_lemma} there is a $\tau \in \Knum(\A)$ such that the Euler pairing on $\langle \lambda_1, \lambda_2, \tau \rangle$ is
\[ \begin{pmatrix}
-2 & 1 & 0 \\
1 & -2 & 0 \\
0 & 0 & 2k
\end{pmatrix}. \]
Let $m = (n-1)/3$, which is an integer; then we find that
\[ w := m\lambda_1 + (2m+1)\lambda_2 + a \tau \]
satisfies \eqref{kappa}.

\smallskip
Case 2: $n \equiv 2 \pmod 3$.  In this case we find that $d \equiv 2 \pmod 6$.  Write $d = 6k+2$.  By Lemma \ref{basis_lemma} there is a $\tau \in \Knum(\A)$ such that the Euler pairing on $\langle \lambda_1, \lambda_2, \tau \rangle$ is
\[ \begin{pmatrix}
-2 & 1 & 0 \\
1 & -2 & 1 \\
0 & 1 & 2k
\end{pmatrix}. \]
Let $m = (a-n-2)/3$, which is an integer; then we find that
\[ w := m\lambda_1 + (2m+1)\lambda_2 + a \tau \]
satisfies \eqref{kappa}.

\smallskip
Case 3: $n \equiv 0 \pmod 3$.  Again we find that $d \equiv 2 \pmod 6$.  Argue as in the previous case but with $m = (a+n-1)/3$.
\end{proof}

\bibliographystyle{plain}
\bibliography{hassett_vs_galkin}

\end{document}